\documentclass[letterpaper, 10 pt, conference]{ieeeconf}  
\IEEEoverridecommandlockouts                        

\usepackage{amsmath,amssymb,amsfonts, mathtools}
\usepackage{algorithmic}
\usepackage{algorithm}
\usepackage{array}
\usepackage[caption=false,font=normalsize,labelfont=sf,textfont=sf]{subfig}
\usepackage{textcomp}
\usepackage{stfloats}
\usepackage{url}
\usepackage{verbatim}
\usepackage{graphicx}
\usepackage{cite}
\usepackage{booktabs}
\usepackage{cancel}
\usepackage[normalem]{ulem}
\usepackage[colorlinks=true, allcolors=blue]{hyperref}

\usepackage{soul}
\usepackage[dvipsnames]{xcolor}

\usepackage{tikz}
\usepackage{pgfplots}
\pgfplotsset{compat=1.18}
\usetikzlibrary{arrows.meta, decorations.pathmorphing, decorations.markings,
                positioning, calc, patterns, shapes.geometric, fadings}
\usetikzlibrary{arrows.meta, calc, fadings}
\newtheorem{proposition}{Proposition}% 
\newtheorem{lemma}{Lemma}
\newtheorem{corollary}{Corollary}

\newtheorem{remark}{Remark}%

\newtheorem{assumption}{Assumption}%

\newcommand{\todo}[1]{{\color{red} [TO-DO] #1}}

\newcommand\vs[1]{{\color{BrickRed}{#1}}}

\newcommand{\vb}[0]{\boldsymbol{b}}
\newcommand{\va}[0]{\boldsymbol{a}}
\newcommand{\vz}[0]{\boldsymbol{z}}
\newcommand{\vv}[0]{\boldsymbol{v}}
\newcommand{\vw}[0]{\boldsymbol{w}}
\newcommand{\ve}[0]{\boldsymbol{e}}
\newcommand{\vphi}[0]{\boldsymbol{\phi}}
\newcommand{\vpsi}[0]{\boldsymbol{\psi}}
\newcommand{\vzero}[0]{\boldsymbol{0}}

\renewcommand{\theenumi}{\roman{enumi}}

\renewcommand{\baselinestretch}{0.97}

\title{\LARGE \bf
Neuromorphic Realization of Best Response in Finite-Action Games
}

\author{Himani Sinhmar$^{a}$, 
        Vaibhav Srivastava$^{b}$,
        Naomi Ehrich Leonard$^{a}$% <-this % stops a space
    \thanks{}% <-this % stops a space
    \thanks{Research was sponsored by the Army Research Office and was accomplished under Grant Number W911NF2410126. The views and conclusions contained in this document are those of the authors and should not be interpreted as representing the official policies, either expressed or implied, of the Army Research Office or the U.S. Government. The U.S. Government is authorized to reproduce and distribute reprints for Government purposes notwithstanding any copyright notation herein.}
    \thanks{$^{a}$ Department of Mechanical and Aerospace Engineering, Princeton University; \texttt{\{himani.sinhmar,naomi\}@princeton.edu}}%
    \thanks{$^{b}$  Department of Electrical and Compute Engineering, Michigan State University \texttt{vaibhav@msu.edu}}%
}

\begin{document}

\maketitle
\thispagestyle{empty}
\pagestyle{empty}

%%%%%%%%%%%%%%%%%%%%%%%%%%%%%%%%%%%%%%%%%%%%%%%%%%%%%%%%%%%%%%%%%%%%%%%%%%%%%%%%
\begin{abstract}
We develop a mechanistic dynamical-systems formulation of best response in finite-action games with relational structure on the action set. The proposed neuromorphic decision dynamics realize best response as the stable outcome of an internal state-space process, rather than as an externally imposed choice rule. This provides a deterministic account of commitment formation, symmetry resolution through basins of attraction, and hysteresis and decision persistence under perturbations. For action spaces with circulant coupling, we prove using Lyapunov-Schmidt reduction that the action-coupling operator determines which components of evidence govern decision formation. We further show that the dynamics implicitly compute a geometry-aware utility, converge exponentially to the corresponding best response with rate independent of the number of actions, and switch only when evidence is sufficiently strong. In contrast, supplying the same geometry-aware utility directly to logit dynamics does not recover these properties, showing that relational structure must be embedded in the decision mechanism itself. We illustrate the framework in a repeated coverage game, prove that the induced game is an exact potential game, and show that its Nash equilibria are reached by the neuromorphic dynamics.

\end{abstract}
\section{Introduction}
Decision-making problems often require an agent to select an action from a finite set and to do so repeatedly as conditions evolve over time~\cite{paccagnan2022utility}. When there are multiple agents and factors such as congestion limitations, shared resources, or coordination constraints, 
%In many such settings, 
actions are not isolated alternatives but rather carry relational structure.
% set carries relational structure beyond its labels: 
% nearby headings lead to similar motion, adjacent sectors overlap in coverage, and tasks sharing resources are partial substitutes. 
For example, when an agent selects a motion direction, nearby directions may yield similar motion outcomes. When choosing a sensing sector, adjacent sectors may overlap in coverage.  When allocating effort across candidate tasks, tasks requiring overlapping capabilities may be partial substitutes. When committing to one among several resource-seeking actions, actions close in heading or location may compete for the same resource.  Actions sharing downstream paths or bottlenecks may have coupled payoffs. 
When such relations matter, decision making should account not only for the utility of each action in isolation but also for how utility is distributed across related actions.

%\hs{the following explanation on action-coupling can be removed. the first paragraph, i think, motivates action relations}

%Action relations may admit positive coupling between similar or overlapping actions and negative coupling between competitive or incompatible actions. 
%a block structure, where each block represents a family of interchangeable or closely related choices.
% % such as neighboring sectors, geometrically similar motion commands, or tasks tied to the same resource class, Positive coupling within blocks captures similarity or overlap whereas negative coupling across blocks captures competition or incompatibility. Ring, lattice, hierarchical, and signed-partition structures are all special cases. In each instance, the 

%The coupling encodes how action-utility should spread, aggregate, or be contrasted across actions during decision formation.

Classic best response selects the action with highest utility under the current utility profile~\cite{swenson2018BR}. 
Smooth variants such as logit replace this deterministic choice by a probabilistic response map \cite{mckelvey1995quantal,blume1993statistical}. These models prescribe an action-selection rule, but they do not leverage relational structure among actions beyond including action coupling in utility construction, which is not robust. 
% are useful at the level of response rules, 
They also fall short in how they deal with time-varying, corrupted, noisy, and/or partial evidence~\cite{wang2023convergence}.  %and providing hysteresis and persistence. 
Importantly, classic approaches do not provide a mechanism for how best response is realized, how near-ties are resolved, or how commitment persists under transient perturbations. %Nor do they leverage relational structure among actions beyond including action coupling in utility construction, which is not robust. 

To address these shortcomings, we propose and study a mechanistic dynamical system formulation of best response as an internal decision process. Our formulation uses neuromorphic decision dynamics as a state-space realization of best response in finite-action games with relational structure on the action set. Each agent maintains an internal decision state over actions. External evidence enters as an input and a mixed-sign coupling operator on the action set shapes how evidence is integrated during decision formation. Action relations may admit positive coupling between similar or overlapping actions and negative coupling between competitive or incompatible actions. The selected action is realized by the stable attractor reached by the internal dynamics instead of an externally imposed $\arg\max$ or by stochastic sampling~\cite{auletta2011convergence}, as in logit-type dynamics that converge more slowly to a distribution over actions corresponding to a quantal equilibrium~\cite{auletta2011convergence}. Our decision dynamics go beyond classic best response by giving rich deterministic behavior including fast commitment through attractor structure, basin-dependent symmetry resolution, and, in the subcritical regime, hysteresis and persistence.

Our approach connects three lines of work. First, congestion and potential games provide the benchmark setting for repeated finite-action coordination and equilibrium convergence \cite{rosenthal1973class,monderer1996potential}. Second, smooth and stochastic response models study equilibrium selection through perturbed or probabilistic choice \cite{mckelvey1995quantal,blume1993statistical}. Third, the Nonlinear Opinion Dynamics (NOD) model shows that saturating recurrent interactions can generate multistability, tunable sensitivity to input, fast decision formation, and game-theoretic behaviors \cite{bizyaeva2023nonlinear,park2022tuning,moreno2024fast,amorim2024spatial,seelig2015neural}. \emph{Our contribution is to connect these lines through a mechanistic dynamical systems perspective, showing how neuromorphic decision dynamics, based on NOD, realize best response as the stable outcome of an internal state-space process and provide significant advantages to performance.}

% in structured action spaces: action-set coupling defines the relevant best-response notion, and neuromorphic dynamics realize it as a stable decision mechanism.

We specialize in this paper to symmetric circulant mixed-sign coupling among actions. This provides a canonical model of geometry-aware action spaces and admits an explicit spectral and bifurcation analysis. The dominant eigenspace of the coupling matrix determines which components of evidence govern equilibrium selection near onset, leaving the remaining noncritical components of input to be attenuated through stable modes. The result is an action-structure-aware best-response mechanism and, in the subcritical regime, a hysteretic one.

% \nl{Can you please list the contributions and then include a very brief paragraph that lists what is in each section. Here it is kind of confused what are the contributions and what is written in each section. You can look at Ian Xul's paper for reference.}

% \hs{yes, doing that right now.}

% \nl{and then please read through my edits to the introduction. I re-arranged a bit as I think it is important to lead with the notion that classic BR does not leverage action relations since action relations is how the paper starts and seems to be motivated.}
%We further prove that relational structure must be encoded in the decision mechanism itself by demonstrating  that providing geometry-aware utility directly to logit dynamics is not sufficient.
% \noindent\textbf{Contributions.} 
Our \textbf{contributions} are as follows:
\begin{enumerate}
\item  We present a mechanistic neuromorphic formulation of best response decision dynamics in finite-action games with relational action structure using NOD. Our method realizes action-selection as the stable outcome of an internal state-space process.
\item We prove, using Lyapunov--Schmidt reduction, that a coupled action space governs the components of evidence that drive decision formation through the underlying coupling operator.
\item We demonstrate that the neuromorphic dynamics formulation implicitly computes geometry-aware utility, exponentially converges to the best response, is independent of the number of actions, and in the subcritical regime, committed decisions persist under perturbations and switch only with sufficient evidence.
\item We show that providing geometry-aware utility directly to logit dynamics is insufficient, emphasizing that the decision mechanism must encode relational structure underlying the action-set.
\item We instantiate our formulation in a repeated coverage game and prove that it is an exact potential game whose Nash equilibria are reached by neuromorphic dynamics.
\end{enumerate}
Section~\ref{sec:formulation} formulates the repeated game and the neuromorphic decision dynamics. Section~\ref{sec:bifurcation}
develops the bifurcation analysis and 
proves the realization of geometry-aware best response. Section~\ref{sec:game} studies the coverage game instantiation and proves convergence, optimality, and hysteresis.
% Section~\ref{sec:formulation} formulates repeated finite-action decision making over structured action spaces and introduces the neuromorphic decision dynamics. Section~\ref{sec:bifurcation} develops the bifurcation analysis for symmetric circulant mixed-sign couplings and shows how the dominant critical eigenspace governs equilibrium selection in both supercritical and subcritical regimes. Section~\ref{sec:game} studies a coverage-game instantiation and shows how the resulting stable attractors realize the corresponding geometry-aware best response, with hysteresis yielding persistent commitment under time-varying evidence. 
Section~\ref{sec:conclusion} presents conclusions.
% with a discussion on the implications of a mechanistic realization of best response in finite-action games.

%%%%%%%%%%%%%%%%%%%%%%%%%%%%%%%%%%%%%%%%%%%%%%%%%%%%%%%%%%%%%%%%%%%%%%%%%%%%%%%%%
\section{Problem Formulation}
\label{sec:formulation}
% \todo{boldsymbols for vectors}

\subsection{Problem Setup}
\label{sec:setup}
We consider a repeated multi-agent decision problem with agent set $\mathcal{N}=\{1,\dots,N\}$ and common finite action set $\mathcal{A}=\{1,\dots,K\}$. At each epoch, agent $i$ selects $a_i\in\mathcal{A}$; the joint profile is $\va\in\mathcal{A}^N$. The environment provides each agent $i$ with an evidence vector $\vb_i(\va)\in\mathbb{R}^K$, where $b_{ik}(\va)$ is the raw signal to agent $i$ for action $k$, reflecting local observations about costs, rewards, or event densities.

We equip $\mathcal{A}$ with a symmetric \emph{action-coupling operator} $A\in\mathbb{R}^{K\times K}$, where $A_{kj}$ encodes geometry or relational affinity between actions $k$ and $j$ (e.g., spatial proximity, functional substitutability, shared resources). We define \emph{utility} of agent $i$'s actions 
% is 
% then defined 
as 
the projection of its evidence vector onto the dominant eigenspace of $A$:
$U_i(\va) = \Psi_A\!\left(\vb_i(\va)\right)$,
where $\Psi_A:\mathbb{R}^K\to\mathbb{R}^K$ aggregates evidence across actions weighted by coupling strength. 
$U_i \in \mathbb{R}^K$ 
% plays the role of 
is the effective utility in the game-theoretic sense~\cite{paccagnan2022utility}: it is the quantity whose maximization defines rational action selection when the action space carries relational structure.
The \emph{geometry-aware best response} for agent $i$ is then
\begin{align}
    \label{eq:projected_br}
    \mathrm{BR}^A_i(\va_{-i}) = \arg\max_{k \in \mathcal{A}}\, \left[\Psi^A\bigl(\vb_i(k,\, \va_{-i})\bigr)\right]_k,
\end{align}
where $\va_{-i}
% := (a_1,\dots,a_{i-1},a_{i+1},\dots,a_N) 
\in \mathcal{A}^{N-1}$ is the action profile of all agents except $i$. A profile $\va^\star$ is a projected Nash equilibrium if $a^\star_i \in \mathrm{BR}^A_i(\va^\star_{-i})$ for all $i \in \mathcal{N}$. 
In the following, we propose neuromorphic dynamics that realize this best response.

% Classical best response is recovered when $\Psi^A = \mathrm{Id}$.
% This raises a natural question: does feeding $\Psi^A(b_i)$ into standard decision dynamics recover $\mathrm{BR}^A_i$? The answer is no. As we show in Section~\ref{sec:game}, logit dynamics given $\Psi^A(b_i)$ as utility fail to stabilize at $\mathrm{BR}^A_i$: geometry-aware decisions require $A$ to shape the decision dynamics directly
% as an action-coupling operator
% % , not only through evidence. 
% beyond its role in defining utility.
% We analyze the case where $A$ is circulant and show that neuromorphic decision dynamics provably select $\mathrm{BR}^A_i$ as a stable attractor.

\subsection{Neuromorphic Decision Dynamics}
\label{sec:dynamics}
We propose neuromorphic decision dynamics %(NDD) 
using NOD with a cyclic action set $\mathcal{A}$ equipped with a circulant, symmetric action-coupling operator $A$. The action set is viewed as a discrete ring, and $A$ defines a shift-invariant interaction pattern analogous to ring-attractor networks~\cite{amorim2024spatial} whose spectral structure we exploit in Section~\ref{sec:bifurcation}.
Each agent $i$ maintains an internal decision $\vz_i(t) \in \mathbb{R}^K$, where each component $z_i^k$ represents real-valued preference for action $k \in \mathcal{A}$ evolving according to NOD equations \cite{bizyaeva2023nonlinear,leonard2024fast}:
\begin{equation}
\label{eq:nod}
\tau_z \dot{\vz}_i(t) = -d_z \vz_i(t) + S\!\big(\alpha A \vz_i(t) + \vb_i\big),
\end{equation}
where $\tau_z > 0$, $d_z > 0$, $\alpha > 0$, and $\vb_i \in \mathbb{R}^K$ are the decision time constant, leakage rate, coupling gain, and evidence vector (Section~\ref{sec:setup}), respectively. The  nonlinearity $S : \mathbb{R} \to \mathbb{R}$ acts componentwise and is odd, $\mathcal{C}^3$, sigmoidal, with $S'(0) = 1$ and $s_3 \triangleq S'''(0) \neq 0$. Action selection corresponds to a stable equilibrium of  \eqref{eq:nod}, i.e., a solution of
\begin{equation}
\label{eq:F}
F(\vz, \alpha, \vb) \triangleq -d_z \vz + S(\alpha A \vz + \vb) = \vzero.
\end{equation}
The gain $\alpha$ acts as the primary bifurcation parameter: as $\alpha$ increases, the trivial equilibrium $\vz = \vzero$ loses stability and committed decisions emerge as stable nontrivial attractors~\cite{bizyaeva2023nonlinear,leonard2024fast}.

\noindent\emph{Circulant Action Coupling:}
We index $\mathcal{A}$ by equally spaced points on a discrete ring with sector angles $\theta_j = 2\pi j/K$, and take $A$ to be real, symmetric, and circulant: $A_{jk} = \varphi(j-k \bmod K)$ for a symmetric profile $\varphi$. The canonical choice is a Mexican-hat profile with local excitation and surround, for example, \texttt{toeplitz}$\left([+, +, 0, -, \ldots, -, 0, +]\right)$. By standard results~\cite{graytoeplitz}, $A$ is diagonalized by the real discrete Fourier basis:
$\vphi_k \triangleq \{\cos(k\theta_j)\}_{j=0}^{K-1}$,
$\vpsi_k \triangleq \{\sin(k\theta_j)\}_{j=0}^{K-1}$,
$k = 0,\ldots,\lfloor K/2\rfloor$,
with $A\vphi_k = \lambda_k\vphi_k$, $A\vpsi_k = \lambda_k\vpsi_k$, real
eigenvalues satisfying $\lambda_k = \lambda_{K-k}$, and orthonormality
$\langle\vphi_k,\vphi_k\rangle = \langle\vpsi_k,\vpsi_k\rangle = \tfrac{1}{2}$,
$\langle\vphi_k,\vpsi_k\rangle = 0$ under $\langle \boldsymbol{u},\boldsymbol{v}\rangle \triangleq
\tfrac{1}{K}\sum_j u_j v_j$.
The linearization of~\eqref{eq:F} at $\vz=0$ is $L(\alpha) = -d_z I + \alpha A$, sharing eigenvectors with $A$ and scalar gain $-d_z + \alpha\lambda_k$ on eigenmode $k$. Thus, the trivial equilibrium loses stability eigenmode-by-eigenmode as $\alpha$ increases, first along the eigenmode with the largest eigenvalue.

\begin{assumption}[Unique dominant mode]
\label{ass:dominant_mode}
There exists a unique index $k_\star$ such that $\lambda_{k_\star} > \lambda_k$ for all $k \neq k_\star$.
\end{assumption}
Under Assumption~\ref{ass:dominant_mode}, the critical gain is $\alpha_c = d_z / \lambda_{k_\star}$, and instability first occurs  along the two-dimensional subspace $E_c = \mathrm{span}\{\vphi_{k^\star}, \vpsi_{k^\star}\}$ where $\mu = -d_z + \alpha\lambda_{k_\star}$ measures distance from bifurcation. 
% \begin{remark}
%     For a Mexican-hat coupling profile, $k_\star = 1$, so the dominant unstable mode is the first Fourier harmonic. All remaining modes span the stable complement $E_s = E_c^\perp$ and remain strictly damped at $\alpha_c$.
% \end{remark}
{\begin{assumption}[Local near-bifurcation regime]
\label{ass:local_regime}
% For bifurcation analysis, 
We focus on a neighborhood of the critical gain $\alpha_c = \frac{d_z}{\lambda_{k^\star}}$.
assuming that $|\mu|, \|\vb\|$, and $\|\vz\|$ are sufficiently small,
equivalently, that $|\alpha-\alpha_c|$ is small and the equilibrium under consideration lies in a sufficiently small neighborhood of $\vz=\vzero$.
\end{assumption}}

\section{Analysis of Neuromorphic Decision Dynamics}
\label{sec:bifurcation}
% \todo{need to write explicitly that $\vb$ is small, $\vz$ is small }
\begin{figure}[ht!]
    \centering
    \includegraphics[width=\linewidth, trim=10 10 10 10]{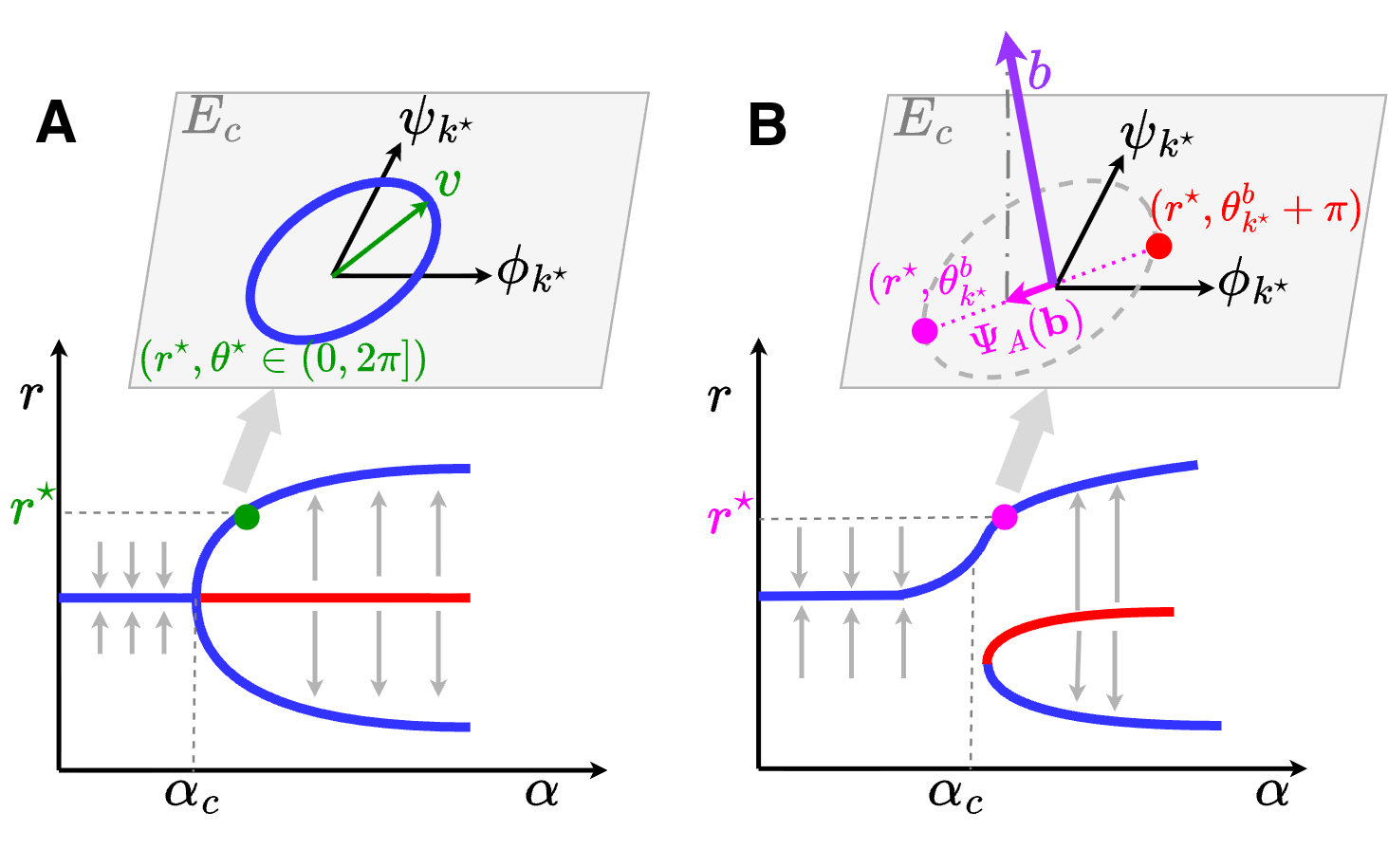}
    \caption{
    \textbf{Bifurcation geometry and action-selection in the critical eigenspace $E_c = \mathrm{span}\{\vphi_{k^\star}, \vpsi_{k^\star}\}$.}
    \textbf{(A)} At the onset $\alpha=\alpha_c$, a supercritical pitchfork generates a continuum of equilibria parameterized by $(r^\star,\theta^\star)$, forming a ring in $E_c$ (blue) of phase-indeterminate committed states along the stable branch $r=r^\star$.
    \textbf{(B)} In the presence of an input $\vb$ (purple), its projection $\Psi_A(\vb)$ onto $E_c$ (magenta) selects a unique equilibrium on the ring. The intersection determines the phase $\theta^b_{k^\star}$ and amplitude $r^\star$, yielding the committed state $(r^\star,\theta^b_{k^\star})$. 
    }
    \label{fig:bd}
    \vspace{-0.5em}
\end{figure}
% The local equilibrium structure of ~\eqref{eq:F} changes
% qualitatively 
As $\alpha$ increases through $\alpha_c$, $\vz = \vzero$ loses stability, and a family of committed decision states emerges (Figure~\ref{fig:bd}). Under Assumptions~\ref{ass:dominant_mode} and~\ref{ass:local_regime}, 
we formalize this transition by reducing the full $K$-dimensional equilibrium equation to a two-dimensional problem on $E_c$ using Lyapunov--Schmidt reduction~\cite{golubitsky2012singularities}.
\subsection{Low-dimensional reduction}
\label{subsec:LS}
At $\alpha = \alpha_c$, the linearization $L(\alpha_c)$ is singular: its kernel is $E_c$ and, since $L(\alpha_c)$ is self-adjoint with respect to $\langle\cdot,\cdot\rangle$, its range is $E_s = E_c^\perp$. Standard implicit function theorem arguments therefore fail at this point, and a reduction is required to characterize equilibria near the onset of decision formation. Write $\vz = \vv + \vw$ with $\vv \in E_c$ and $\vw \in E_s$. Projecting \eqref{eq:F} onto $E_s$ and $E_c$ yields the coupled system 
$\mathcal{G}(\vv,\vw,\alpha,\vb):= \Psi_A F(\vv + \vw, \alpha, \vb)$ and $\mathcal{H}(\vv,\vw,\alpha,\vb):=\Psi_A^\perp F(\vv + \vw, \alpha, \vb)$.
% \begin{equation}
% \label{eq:LS_split}
% \underbrace{\Psi_A F(\vv + \vw, \alpha, \vb)}_{\mathcal{G}(\vv,\vw,\alpha,\vb)} = 0,
% \qquad
% \underbrace{\Psi_A^\perp F(\vv + \vw, \alpha, \vb)}_{\mathcal{H}(\vv,\vw,\alpha,\vb)} = 0
% \end{equation}
Since all noncritical eigenvalues of $L(\alpha_c)$ are strictly negative, $D_w \mathcal{H}(\vzero,\vzero,\alpha_c,\vzero) = \Psi_A^\perp L(\alpha_c)$ is invertible. By the implicit function theorem, there exists a unique $\mathcal{C}^3$ reduction map $W : E_c \times \mathbb{R} \times \mathbb{R}^K \to E_s$ solving $\mathcal{H} = \vzero$ identically, with $W(\vzero,\alpha_c,\vzero) = \vzero$ and $D_v W(\vzero,\alpha_c,\vzero) = \vzero$. The latter follows because $L(\alpha_c)\delta \vv \in E_c$ for any $\delta \vv \in E_c$, which is annihilated by $\Psi_A^\perp$, so variations in $\vv$ do not linearly force the range equation. The odd symmetry of $S$ eliminates quadratic terms, giving $W(\vv,\alpha,\vb) = \mathcal{O}(\|\vb\|) + \mathcal{O}(\|\vv\|^3)$: the stable modes are uniquely determined (locally) to the cubic and linear-in-evidence corrections of the critical mode. Substituting $\vw = W(\vv,\alpha,\vb)$ into the critical equation yields the reduced condition $\mathcal{G}(\vv,\alpha,\vb) = \vzero$ posed entirely on $E_c$.
Let $\circ$ denote Hadamard  (entry-wise) product.
\begin{lemma}[Reduced critical equation]
\label{lem:G_reduced}
{Under Assumptions~\ref{ass:dominant_mode} and~\ref{ass:local_regime}},
%For $(\mu, b, v)$ sufficiently small,
\begin{align}
\label{eq:G_expanded}
\mathcal{G}(\vv,\mu,\vb)
&= \mu \vv + \Psi_A(\vb)
+ \frac{s_3}{6}\alpha_c^3\,\Psi_A\!\big((A\vv)^{\circ 3}\big) \nonumber \\
&\quad + \frac{s_3}{2}\alpha_c^2\,\Psi_A\!\big((A\vv)^{\circ 2} \circ \vb\big)
+ \mathcal{R},
\end{align}
where
$
\mathcal{R}
=
\mathcal{O}(\|\vv\|^5)
+\mathcal{O}(|\mu|\|\vv\|^3)
+\mathcal{O}(\|\vb\|\|\vv\|^2)
+\mathcal{O}(\|\vb\|^2\|\vv\|)
+\mathcal{O}(\|\vb\|^3)
$. 
Here $\mu\vv$ controls the onset of instability, $\Psi_A(\vb)$ is the leading projected evidence, the cubic self-interaction saturates amplitude, and the mixed cubic term gives the leading amplitude-dependent input correction.
% The linear term $\mu \vv$ controls onset of instability, the cubic self-interaction saturates decision amplitude, and the mixed cubic term captures how evidence $\vb$ biases the emerging decision through the coupling structure $A$.
\end{lemma}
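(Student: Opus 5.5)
The plan is to Taylor-expand $S$ to third order, substitute the reduction $\vw=W(\vv,\alpha,\vb)$, and apply $\Psi_A$ term by term. Since $S$ is odd and $\mathcal{C}^3$ with $S'(0)=1$, we have $S(x)=x+\tfrac{s_3}{6}x^3+o(|x|^3)$ componentwise; under Assumption~\ref{ass:local_regime}, and assuming enough smoothness (e.g. $\mathcal{C}^5$) to quote a quintic remainder, this becomes $S(x)=x+\tfrac{s_3}{6}x^3+\mathcal{O}(|x|^5)$. With $\vz=\vv+\vw$, set $\vx=\alpha A\vz+\vb$. Then
\[
\Psi_A F=\Psi_A\big(-d_z\vz+\vx\big)+\tfrac{s_3}{6}\Psi_A\big(\vx^{\circ 3}\big)+\Psi_A\,\mathcal{O}(\|\vx\|^5).
\]

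First I handle the linear part. Because $A$ commutes with the spectral projector $\Psi_A$ and $\Psi_A\vw=\vzero$, we get $\Psi_A(-d_z\vz+\alpha A\vz)=(-d_z+\alpha\lambda_{k_\star})\vv=\mu\vv$, and the input contributes $\Psi_A(\vb)$. Thus the stable-mode correction $\vw$ never enters linearly; it appears only inside the cubic term.

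Second, I expand the cubic term. Write $\alpha=\alpha_c+(\alpha-\alpha_c)$, where $\alpha-\alpha_c=\mu/\lambda_{k_\star}$. Then $\vx=\alpha_c A\vv+\vb+\alpha_c A\vw+(\mu/\lambda_{k_\star})A\vv+\mathcal{O}(|\mu|\|\vw\|)$. Expanding $\vx^{\circ3}$ trinomially gives:
\begin{itemize}
\item $\alpha_c^3(A\vv)^{\circ3}$, the self-interaction;
\item $3\alpha_c^2(A\vv)^{\circ2}\circ\vb$, the mixed term;
\item $3\alpha_c(A\vv)\circ\vb^{\circ2}=\mathcal{O}(\|\vb\|^2\|\vv\|)$;
\item $\vb^{\circ3}=\mathcal{O}(\|\vb\|^3)$;
\item terms carrying a factor of $\mu$, which are $\mathcal{O}(|\mu|\|\vv\|^3)$ plus products already covered by the remainder.
\end{itemize}
Multiplying by $s_3/6$ produces exactly the coefficients $\tfrac{s_3}{6}\alpha_c^3$ and $\tfrac{s_3}{2}\alpha_c^2$ in~\eqref{eq:G_expanded}. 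The quintic Taylor remainder is $\mathcal{O}(\|\vv\|^5+\|\vb\|^5+\cdots)$, which is absorbed into $\mathcal{R}$.

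The main obstacle is controlling the contribution of $\vw$ inside the cubic term. This requires a sharper estimate than $W=\mathcal{O}(\|\vb\|)+\mathcal{O}(\|\vv\|^3)$ alone. Terms with $\vw$ enter as $3\alpha_c^3(A\vv)^{\circ2}\circ A\vw$ and higher order. The $\mathcal{O}(\|\vv\|^3)$ part of $W$ then gives $\mathcal{O}(\|\vv\|^5)$, which is fine. The $\mathcal{O}(\|\vb\|)$ part gives $\mathcal{O}(\|\vb\|\|\vv\|^2)$, which is explicitly allowed in $\mathcal{R}$.

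To make this rigorous I would compute $W$ to leading order from $\mathcal{H}=\vzero$. Its linear part is $\Psi_A^\perp L(\alpha)\vw+\Psi_A^\perp\vb$, so $\vw=-(\Psi_A^\perp L(\alpha_c))^{-1}\Psi_A^\perp\vb+\mathcal{O}(\|\vv\|^3+|\mu|\|\vb\|+\|\vb\|\|\vv\|^2+\|\vb\|^3)$. The implicit function theorem gives uniform bounds on the inverse on $E_s$, because the noncritical eigenvalues $-d_z+\alpha\lambda_k$ stay bounded away from zero near $\alpha_c$ by Assumption~\ref{ass:dominant_mode}. Substituting this into the cubic term and tracking orders confirms that every residual falls into one of the five classes listed in $\mathcal{R}$. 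Oddness of $S$ ensures that no quadratic terms arise at any stage.
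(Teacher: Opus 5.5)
Your proposal is correct and follows the paper's proof essentially step for step. You expand $S$ to cubic order with a quintic remainder, use $\Psi_A A=A\Psi_A$ and $A\vw\in E_s$ to get the exact linear part $\mu\vv+\Psi_A(\vb)$, replace $\alpha$ by $\alpha_c$ in the cubic at cost $\mathcal{O}(|\mu|\|\vv\|^3)$, and bound every $\vw$-dependent cubic term via $\|\vw\|=\mathcal{O}(\|\vv\|^3)+\mathcal{O}(\|\vb\|)$. Two small remarks: your observation that a quintic remainder requires more than the stated $\mathcal{C}^3$ regularity is a fair refinement, and the "sharper" leading-order formula for $\vw$ is not actually needed here, since the crude bound already places its contribution in the permitted class $\mathcal{O}(\|\vb\|\|\vv\|^2)$.
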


\begin{proof}
Expand $F$ in Eq.~\eqref{eq:F} using $S(u) = u + \tfrac{s_3}{6}u^{\circ 3} + \mathcal{O}(\|u\|^5)$ with $\mathbf{u}=\alpha A(\mathbf{v}+\mathbf{w})+\mathbf{b}$, and project onto $E_c$:
% \begin{equation*}
$\mathcal{G} = \Psi_A L(\alpha)(\vv+\vw) + \Psi_A(\vb)
+ \tfrac{s_3}{6} \Psi_A\!\big(\alpha A(\vv+\vw)+\vb\big)^{\circ 3}
+ \mathcal{O}_{\geq 5}$.
% \end{equation*}
For the linear terms: $\vv \in E_c$ is an eigenvector of $A$ with eigenvalue $\lambda_{k_\star}$, so $\Psi_A L(\alpha)\vv = \mu \vv$. Since $E_s$ is invariant under $A$ for any symmetric circulant matrix~\cite{graytoeplitz} (each real Fourier plane is an eigenspace), $A\vw \in E_s$ and thus $\Psi_A L(\alpha)\vw = \vzero$. The linear contribution is exactly $\mu \vv + \Psi_A(\vb)$.
For the cubic term, since $\alpha=\alpha_c+\mu/\lambda_{k_\star}$, replacing $\alpha$ by $\alpha_c$ in the retained cubic coefficients introduces the additional remainder $\mathcal{O}(|\mu|\|\vv\|^3)$, while the mixed corrections involving $\vb$ are absorbed into $\mathcal{O}(\|\vb\|\|\vv\|^2)$ and higher-order terms. Thus, retaining the leading cubic contributions,
$
\big(\alpha A(\vv+\vw)+\vb\big)^{\circ 3}
=
(\alpha_c A\vv)^{\circ 3}
+3(\alpha_c A\vv)^{\circ 2}\circ\vb
+\mathrm{rem.}
$
Using $\|\vw\| = \mathcal{O}(\|\vv\|^3)+\mathcal{O}(\|\vb\|)$, the remaining terms satisfy
$(\alpha_c A\vv)^{\circ 2}\circ A\vw
=
\mathcal{O}(\|\vv\|^5)
+
\mathcal{O}(\|\vb\|\|\vv\|^2)$,
$(\alpha_c A\vv)\circ(\alpha_c A\vw+\vb)^{\circ 2}
=
\mathcal{O}(\|\vb\|^2\|\vv\|)+\mathrm{h.o.t.}$,
and
$(\alpha_c A\vw+\vb)^{\circ 3}
=
\mathcal{O}(\|\vb\|^3)+\mathrm{h.o.t.}$.
The projected cubic expansion contains the displayed terms
% The $A$-invariance of $E_s$ ensures $\Psi_A(A\vw) = 0$, so the cubic projection reduces to 
$\Psi_A((A\vv)^{\circ 3})$ and $\Psi_A((A\vv)^{\circ 2} \circ \vb)$,
while the remaining contributions, including those involving $\vw$, are absorbed into $\mathcal{R}$  
% Collecting terms 
yielding~\eqref{eq:G_expanded}.
% without cross terms between $E_c$ and $E_s$. Collecting retained terms and bundling higher-order contributions into $\mathcal{R}_{\geq 4}$ yields~\eqref{eq:G_expanded}.
\end{proof}

\subsection{Polar Reduced Equations}
\label{subsec:polar}

Any $\vv \in E_c$ admits a polar representation $\vv = r e_\theta$ with $r \geq 0$ and $e_\theta = \sqrt{2}(\cos\theta\, \vphi_{k^\star} + \sin\theta\, \vpsi_{k^\star})$. The orthonormal frame $\{e_\theta, e_{\theta+\pi/2}\}$ on $E_c$ yields two scalar equations by projecting \eqref{eq:G_expanded} onto the radial and angular directions. The evidence $\vb \in \mathbb{R}^K$ decomposes over the Fourier basis as $b_j = a_0 + \sum_m (a_m \cos(m\theta_j) + \beta_m \sin(m\theta_j))$, with amplitude $A_m^b = \sqrt{a_m^2 + \beta_m^2}$ and phase $\theta_m^b = \operatorname{atan2}(\beta_m, a_m)$ at mode $m$, 
for $(a_m, \beta_m)\ne (0,0)$.

\begin{proposition}[Local polar reduced equations]
\label{prop:polar}
With $\Gamma = \frac{s_3}{4}\alpha_c^3\lambda_{k_\star}^3$ and 
$\eta = \frac{s_3}{4}\alpha_c^2\lambda_{k_\star}^2$, the reduced 
equilibrium conditions $g_r = 0$ and $g_\theta = 0$ are, up to cubic order,
%For $b = 0$, $g_{\theta} = 0$ and,
\begin{align}
g_r &= \mu r + \Gamma r^3
+ \frac{A_{k_\star}^b}{\sqrt{2}}\cos(\theta - \theta_{k_\star}^b)
\label{eq:polar_r} \\
&\quad + \frac{\eta r^2}{\sqrt{2}}\Big[
3A_{k_\star}^b\cos(\theta - \theta_{k_\star}^b)
+ A_{3k_\star}^b\cos(3\theta - \theta_{3k_\star}^b)
\Big] \nonumber \\
% \end{align}
% \begin{align}
%&\text{For } b \neq 0,\quad 
g_\theta &= \boldsymbol{1}(b\ne 0)\bigg(\frac{A_{k_\star}^b}{\sqrt{2}}\sin(\theta_{k_\star}^b - \theta)
\label{eq:polar_theta} \\
&\quad + \frac{\eta r^2}{\sqrt{2}}\Big[
3A_{k_\star}^b\sin(\theta_{k_\star}^b - \theta)
+ A_{3k_\star}^b\sin(\theta_{3k_\star}^b - 3\theta)
\Big]\bigg). \nonumber
\end{align}
\end{proposition}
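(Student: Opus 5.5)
Starting from Lemma~\ref{lem:G_reduced}, I will substitute $\vv = r e_\theta$ and take inner products of \eqref{eq:G_expanded} with the orthonormal frame. The radial equation is $g_r = 2\langle \mathcal{G}, e_\theta\rangle$ (with the factor chosen so that $\|e_\theta\|=1$ in the normalization induced by $\langle\vphi_{k_\star},\vphi_{k_\star}\rangle=\tfrac12$). The angular equation is $g_\theta = 2\langle \mathcal{G}, e_{\theta+\pi/2}\rangle$, and I drop $\mathcal{R}$. Each of the four displayed terms is evaluated separately.

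\emph{Linear terms.} Since $e_\theta \perp e_{\theta+\pi/2}$, the term $\mu\vv$ contributes $\mu r$ to $g_r$ and nothing to $g_\theta$. For $\Psi_A(\vb)$, only the mode-$k_\star$ Fourier coefficients $(a_{k_\star},\beta_{k_\star})$ of $\vb$ survive the projection. Pairing with $\sqrt2(\cos\theta\,\vphi_{k_\star}+\sin\theta\,\vpsi_{k_\star})$ gives $\tfrac{1}{\sqrt2}(a_{k_\star}\cos\theta+\beta_{k_\star}\sin\theta) = \tfrac{A^b_{k_\star}}{\sqrt2}\cos(\theta-\theta^b_{k_\star})$. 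The same pairing with $e_{\theta+\pi/2}$ gives the $\sin(\theta^b_{k_\star}-\theta)$ term. I will check the constant $1/\sqrt2$ against the paper's normalization of $e_\theta$ rather than re-derive it.

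\emph{Cubic self-interaction.} Because $\vv\in E_c$, we have $A\vv=\lambda_{k_\star} r e_\theta$, and componentwise $(e_\theta)_j = \sqrt2\cos(k_\star\theta_j-\theta)$. Then $\cos^3 x = \tfrac34\cos x+\tfrac14\cos 3x$. The $3k_\star$ harmonic is annihilated by $\Psi_A$, provided $3k_\star \not\equiv \pm k_\star \pmod K$; I will note this non-resonance condition, which fails e.g.\ for $K=4$, $k_\star=1$. What remains is $2\sqrt2\cdot\tfrac34\,\lambda_{k_\star}^3 r^3 \cos(k_\star\theta_j-\theta)$. Multiplying by $\tfrac{s_3}{6}\alpha_c^3$ and projecting radially gives a multiple of $\tfrac{s_3}{4}\alpha_c^3\lambda_{k_\star}^3 r^3 = \Gamma r^3$. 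Its angular projection vanishes, which reflects the $O(2)$ equivariance of the unforced problem.

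\emph{Mixed term (main obstacle).} Here $(A\vv)^{\circ2}\circ\vb = 2\lambda_{k_\star}^2 r^2\cos^2(k_\star\theta_j-\theta)\,b_j$, with $\cos^2 x=\tfrac12(1+\cos 2x)$. The product with the Fourier expansion of $\vb$ produces harmonics $m$ and $m\pm 2k_\star$. Projecting onto mode $k_\star$ keeps three contributions: the $m=k_\star$ component from the constant part, the $m=k_\star$ component via $|m-2k_\star|=k_\star$, and the $m=3k_\star$ component. The two $m=k_\star$ contributions combine with weights that, after the radial projection, produce $\cos(\theta-\theta^b_{k_\star})$ with total coefficient $3$. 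The $m=3k_\star$ piece yields $\cos(3\theta-\theta^b_{3k_\star})$ with coefficient $1$. Equivalently, I can pair directly: $\langle (A\vv)^{\circ2}\circ \vb, e_\theta\rangle \propto \tfrac1K\sum_j \cos^3(k_\star\theta_j-\theta)\,b_j$. Expanding $\cos^3$ again gives $\tfrac34\cos + \tfrac14\cos3$, and the ratio $3:1$ follows at once. This identity is the cleanest route, and I will use it.

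For the angular equation, the analogous identity is $\cos^2 x\sin x = \tfrac14\sin x + \tfrac14\sin 3x$. Carrying the sign conventions of $e_{\theta+\pi/2}$ through the various projections and matching the stated bracket is the main bookkeeping step; this is the part most likely to need care. Collecting constants into $\eta = \tfrac{s_3}{4}\alpha_c^2\lambda_{k_\star}^2$ with the $1/\sqrt2$ from the $\vb$ pairing gives \eqref{eq:polar_r}–\eqref{eq:polar_theta}. The indicator $\boldsymbol{1}(b\neq0)$ records that $g_\theta\equiv0$ when $\vb=0$.
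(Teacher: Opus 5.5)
Your route is the paper's: substitute $\vv=r\ve_\theta$, use $A\vv=\lambda_{k_\star}r\ve_\theta$, Fourier-decompose, and project onto $\ve_\theta$ and $\ve_{\theta+\pi/2}$. Your radial computation is correct and tidier than the paper's. Pairing directly through $\cos^3x=\tfrac34\cos x+\tfrac14\cos 3x$ gives $\Psi_A(\ve_\theta^{\circ3})=\tfrac32\ve_\theta$ (the paper writes $\tfrac{3}{2\sqrt2}$ but then uses $\tfrac32$). It also gives the bracket $3A^b_{k_\star}\cos(\theta-\theta^b_{k_\star})+A^b_{3k_\star}\cos(3\theta-\theta^b_{3k_\star})$ with exactly the prefactor $\eta r^2/\sqrt2$. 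Your non-resonance caveat $3k_\star\not\equiv\pm k_\star \pmod K$ is a real condition that the paper omits. One slip: $\ve_\theta$ already has unit norm under $\langle\cdot,\cdot\rangle$ (since $2\cdot\tfrac12=1$), so $g_r=\langle\mathcal G,\ve_\theta\rangle$. With your factor $2$ the linear term would be $2\mu r$, and your own term-by-term evaluations in fact use factor $1$.

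The genuine gap is the angular equation, which you defer as sign bookkeeping. Carry it out with the identity you quote. With $x_j=k_\star\theta_j-\theta$ we have $(\ve_{\theta+\pi/2})_j=\sqrt2\sin x_j$, so $\tfrac{s_3}{2}\alpha_c^2\langle(A\vv)^{\circ2}\circ\vb,\ve_{\theta+\pi/2}\rangle=\tfrac{s_3}{2}\alpha_c^2\cdot 2\sqrt2\,\lambda_{k_\star}^2r^2\,\tfrac1K\sum_j b_j(\tfrac14\sin x_j+\tfrac14\sin 3x_j)$. This equals $\tfrac{\eta r^2}{\sqrt2}[A^b_{k_\star}\sin(\theta^b_{k_\star}-\theta)+A^b_{3k_\star}\sin(\theta^b_{3k_\star}-3\theta)]$, so the coefficient of the first term is $1$, not $3$. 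In your mode-counting language, there are two mode-$k_\star$ contributions, with weights $1$ and $\tfrac12$; the second carries the phase $2\theta-\theta^b_{k_\star}$, i.e.\ $\theta^b_{k_\star}$ reflected about $\theta$. They add to $\tfrac32$ radially but give $1-\tfrac12=\tfrac12$ transversally. No orientation convention for $\ve_{\theta+\pi/2}$ turns a $\tfrac14\!:\!\tfrac14$ split into $\tfrac34\!:\!\tfrac14$, and $\partial_\theta g_r$ would give $3\!:\!3$.

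So the step ``matching the stated bracket'' cannot be carried out. Completed honestly, your argument establishes $g_r$ as displayed but yields $g_\theta$ with $A^b_{k_\star}$ in place of $3A^b_{k_\star}$ inside the bracket. The paper's proof does not close this either; it leaves the transverse part as unspecified ``angular terms.'' To finish, you must either correct the displayed coefficient, or state explicitly that the $\eta r^2 A^b_{k_\star}$ term only renormalizes the leading $\tfrac{A^b_{k_\star}}{\sqrt2}\sin(\theta^b_{k_\star}-\theta)$ and is immaterial near onset. Note that the paper's later condition $3A^b_{k_\star}>A^b_{3k_\star}$ rests on this factor $3$.
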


\begin{proof}
Substitute $v = r\ve_\theta$ into the reduced equation \eqref{eq:G_expanded} from Lemma~\ref{lem:G_reduced}. Since $\ve_\theta$ is an eigenvector of $A$ with eigenvalue $\lambda_{k_\star}$, we have $A\vv = \lambda_{k_\star} r \ve_\theta$, and thus $(A\vv)^{\circ 3} = \lambda_{k_\star}^3 r^3\, \ve_\theta^{\circ 3}$ and $(A\vv)^{\circ 2} \circ \vb = \lambda_{k_\star}^2 r^2\, \ve_\theta^{\circ 2} \circ \vb$.
For the cubic self-interaction, the identity $\Psi_A(\ve_\theta^{\circ 3}) = \tfrac{3}{2\sqrt{2}} \ve_\theta$  holds for circulant $A$~\cite{graytoeplitz}, yielding the coefficient $\Gamma = \frac{s_3}{6}\alpha_c^3\lambda_{k_\star}^3 \cdot \frac{3}{2} = \frac{s_3}{4}\alpha_c^3\lambda_{k_\star}^3$.
For the mixed cubic term, decompose $\ve_\theta^{\circ 2} \circ \vb$ in the Fourier basis. Using $\ve_\theta^{\circ 2} = \frac{1}{\sqrt{2}} (\Phi_0 + \cos 2\theta\,\Phi_{2k_\star} + \sin 2\theta\,\Psi_{2k_\star})$, the convolution with $\vb$ projects onto modes $k_\star$ and $3k_\star$ 
under $\Psi_A$: 
$
\Psi_A(\ve_\theta^{\circ 2} \circ \vb) = \frac{1}{\sqrt{2}}\Big[
    A_{k_\star}^b \cos(\theta - \theta_{k_\star}^b)\,\ve_\theta
    + A_{3k_\star}^b \cos(3\theta - \theta_{3k_\star}^b)\,\ve_\theta
    + \text{angular terms}
    \Big]$,
    with coefficient $\eta = \frac{s_3}{4}\alpha_c^2\lambda_{k_\star}^2
$.
Similarly, projecting $\mathcal{G} = \vzero$ onto $\ve_\theta$ (radial) and 
$\ve_{\theta+\pi/2}$ (angular) yields \eqref{eq:polar_r}--\eqref{eq:polar_theta} directly, while the remaining contributions, including those induced through the reduction map $\vw$ are absorbed into the stated remainder terms in~\eqref{eq:G_expanded}.
\end{proof}

\subsection{Supercritical Bifurcation and Decision Formation}
\label{subsec:supercritical}

% \vs{Below: we should show exactly what is $g_\theta$ when $b=0$ since $\theta^b$ is not defined. Also, while its clear from structure, it should be shown with a Lyapunov function or other technique that $(r^*, \theta^b)$ is stable. } \hs{done}

%\paragraph{Unforced case ($\vb = 0$)} 
\emph{Unforced case ($\vb=\vzero$):} The polar equations reduce to $g_r=\mu r+\Gamma r^3$ and $g_\theta\equiv 0$. Algebraically, the radial normal form has nonzero solutions $r=\pm\sqrt{-\mu/\Gamma}$ for $\Gamma<0$ and $\mu>0$. In polar coordinates, however, $r\ge 0$, so the negative solution is interpreted as a phase shift by $\pi$: $-\sqrt{-\mu/\Gamma}\,\ve_\theta =\sqrt{-\mu/\Gamma}\,\ve_{\theta+\pi}$. Thus the signed branches form the committed ring $\{r^\star\ve_\theta:\theta\in[0,2\pi)\}$ with $r^\star=\sqrt{-\mu/\Gamma}$, as shown in Figure~\ref{fig:bd}. The equilibrium direction $\theta^\star$ is indeterminate and selected by initial conditions. At $(r^\star,\theta^\star)$, $\partial_r g_r(r^\star)=\mu+3\Gamma(r^\star)^2=-2\mu<0$, while $\partial_\theta g_\theta(r^\star,\theta^\star)=0$.
% , reflecting amplitude stability and neutral phase degeneracy.
Thus the branch is locally stable in
amplitude, while the 0 angular eigenvalue reflects neutral
phase degeneracy due to rotational symmetry.
\begin{lemma}[Suppression of noncritical evidence]
\label{lem:suppression}
Let $\vz=\vv+\vw$ with $\vv\in E_c$ and $\vw\in E_s$ be the Lyapunov--Schmidt decomposition. For each noncritical Fourier mode $m\neq k_\star$, the corresponding coefficient of $\vw$ satisfies $w_m=-\frac{b_m}{\sigma_m}+\mathrm{h.o.t.}$, where $\sigma_m:=-d_z+\alpha_c\lambda_m<0$ is the stable eigenvalue of $L(\alpha_c)$ at mode $m$. Hence, if
\begin{equation}
\label{eq:suppression}
|\sigma_m|\gg A_m^b, \qquad \forall\, m\neq k_\star,
\end{equation}
noncritical harmonics enter the reduced critical equation only through higher-order corrections in $\vw$ and do not affect phase selection at leading order.
\end{lemma}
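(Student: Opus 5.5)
We will work directly with the range equation $\mathcal{H}(\vv,\vw,\alpha,\vb)=\Psi_A^\perp F(\vv+\vw,\alpha,\vb)=\vzero$ that defines the reduction map $W$, and linearize it in $(\vw,\vb)$ at $(\vzero,\vzero,\alpha_c,\vzero)$. Expanding $S(u)=u+\tfrac{s_3}{6}u^{\circ 3}+\mathcal{O}(\|u\|^5)$ with $u=\alpha A(\vv+\vw)+\vb$ gives
\begin{equation*}
\vzero=\Psi_A^\perp L(\alpha)(\vv+\vw)+\Psi_A^\perp\vb+\tfrac{s_3}{6}\Psi_A^\perp\big(\alpha A(\vv+\vw)+\vb\big)^{\circ 3}+\mathcal{O}_{\ge 5}.
\end{equation*}
As in the proof of Lemma~\ref{lem:G_reduced}, $E_c$ and $E_s$ are both $A$-invariant, because every real Fourier plane is an eigenspace of the symmetric circulant $A$. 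Hence $\Psi_A^\perp L(\alpha)\vv=\vzero$ and $\Psi_A^\perp L(\alpha)\vw=L(\alpha)\vw$. Moreover, $L(\alpha)$ restricted to $E_s$ is diagonal in the Fourier basis, with entry $-d_z+\alpha\lambda_m$ on each mode $m\neq k_\star$.

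Next, we project this equation onto a single noncritical mode $m$, that is, onto $\vphi_m$ and $\vpsi_m$ separately. Write $w_m$ and $b_m$ for the corresponding Fourier coefficients, each understood as a cosine/sine pair. The linear part of the projected equation is $(-d_z+\alpha\lambda_m)w_m+b_m$. Setting $\alpha=\alpha_c+\mu/\lambda_{k_\star}$ gives $-d_z+\alpha\lambda_m=\sigma_m+\mathcal{O}(|\mu|)$. Assumption~\ref{ass:dominant_mode} gives $\lambda_m<\lambda_{k_\star}$, so $\sigma_m=d_z(\lambda_m/\lambda_{k_\star}-1)<0$, and the coefficient is invertible. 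Solving, we obtain
\begin{equation*}
w_m=-\frac{b_m}{\sigma_m}+\mathcal{O}(|\mu|\|\vb\|)+\mathcal{O}(\|\vv\|^3)+\mathcal{O}(\|\vb\|\|\vv\|^2)+\mathcal{O}(\|\vb\|^3).
\end{equation*}
The cubic remainder terms come from the mode-$m$ component of the projected cubic nonlinearity. We bound them using the estimate $W=\mathcal{O}(\|\vb\|)+\mathcal{O}(\|\vv\|^3)$ already established before Lemma~\ref{lem:G_reduced}. This yields the claimed leading-order formula. Alternatively, we could differentiate the implicit identity $\mathcal{H}(\vv,W,\alpha,\vb)\equiv\vzero$ in $\vb$, which gives $D_bW=-(\Psi_A^\perp L(\alpha_c)|_{E_s})^{-1}\Psi_A^\perp$, and then read off that operator in the Fourier basis.

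For the consequence, we insert $\vw=W$ into the critical equation $\mathcal{G}$. The linear term $\Psi_A L(\alpha)\vw$ vanishes identically by $A$-invariance of $E_s$, so $\vw$ enters $\mathcal{G}$ only through cubic terms of the form $\Psi_A\big((A\vv)^{\circ 2}\circ A\vw\big)$ and higher. Each such term is of size $\|\vv\|^2|w_m|\lesssim\|\vv\|^2A_m^b/|\sigma_m|$. Under \eqref{eq:suppression}, this contribution is small relative to the retained terms $\mu\vv$, $\Psi_A(\vb)$, and $\Gamma r^3$ in Proposition~\ref{prop:polar}. In particular, it does not change the sign structure of $g_\theta$ at leading order, which is governed by $A^b_{k_\star}\sin(\theta^b_{k_\star}-\theta)$. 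The direct evidence terms $\Psi_A(\vb)$ and $\Psi_A((A\vv)^{\circ 2}\circ\vb)$ are already accounted for in Proposition~\ref{prop:polar}. So the only new effect of noncritical harmonics runs through $\vw$, and that effect is higher order.

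We expect the main obstacle to be making ``does not affect phase selection at leading order'' precise. The difficulty is that the relevant comparison is between $\|\vv\|^2 A^b_m/|\sigma_m|$ and $A^b_{k_\star}$. We would therefore state the conclusion as a relative bound under Assumption~\ref{ass:local_regime}, with $r=\mathcal{O}(\sqrt{|\mu|})$, rather than prove an exact statement.
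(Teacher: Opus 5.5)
Your proposal is correct and follows essentially the same route as the paper: apply the implicit function theorem to the range equation $\mathcal{H}=\vzero$, invert $L(\alpha_c)$ on $E_s$ mode by mode in the Fourier basis to get $w_m=-b_m/\sigma_m+\mathrm{h.o.t.}$ (your alternative $D_{\vb}W=-(\Psi_A^\perp L(\alpha_c)|_{E_s})^{-1}\Psi_A^\perp$ is exactly the paper's formula), and then argue that $\vw$ re-enters $\mathcal{G}$ only at higher order. Your version is more explicit than the paper's: you check $\sigma_m=d_z(\lambda_m/\lambda_{k_\star}-1)<0$ from Assumption~\ref{ass:dominant_mode}, track the remainders, and note that $A$-invariance of $E_s$ removes any linear $\vw$ term from $\mathcal{G}$. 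Your caveat that ``leading order'' really means a relative $\mathcal{O}(r^2)$ correction to the $A^b_{k_\star}\sin(\theta^b_{k_\star}-\theta)$ term is a fair sharpening of what the paper only asserts.
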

\begin{proof}
The range equation is $\mathcal H(\vv,\vw,\alpha,\vb)=\Psi_A^\perp F(\vv+\vw,\alpha,\vb)=\vzero$. Since $D_w\mathcal H(0,0,\alpha_c,0)=\Psi_A^\perp L(\alpha_c)$ is invertible on $E_s$, the implicit function theorem gives $\vw=-\bigl(L(\alpha_c)\bigr)^{-1}\!\left(\Psi_A^\perp \vb+\mathrm{h.o.t.}\right)$. In the Fourier basis of the circulant operator $A$, this yields $w_m=-b_m/\sigma_m+\mathrm{h.o.t.}$ for each $m\neq k_\star$. Therefore, under \eqref{eq:suppression}, these modes are attenuated by the stable spectrum and contribute only higher-order corrections to the reduced angular equation.
\end{proof}

%paragraph{Forced case ($\vb \neq 0$).}
\emph{Forced case ($\vb \neq \vzero$):}
Evidence breaks rotational symmetry and
by Proposition~\ref{prop:polar},
$g_\theta=
\frac{A_{k_\star}^b}{\sqrt{2}}\sin(\theta_{k_\star}^b-\theta)
+ 
\frac{\eta r^2}{\sqrt{2}}
\Big[
3A_{k_\star}^b\sin(\theta_{k_\star}^b-\theta)
+
A_{3k_\star}^b\sin(\theta_{3k_\star}^b-3\theta)
\Big]$.
By Lemma~\ref{lem:suppression}, harmonics $m\neq k_\star$ are filtered through the stable component $w$ and do not enter the phase equation at leading order. The only leading correction on $E_c$ is the $3k_\star$ harmonic generated by cubic mixing. If $3A_{k_\star}^b>A_{3k_\star}^b$, the critical harmonic dominates near onset, so $g_\theta=0$ has, to leading order, the two solutions $\theta^\star=\theta_{k_\star}^b$
and $\theta^\star=\theta_{k_\star}^b+\pi$.
At these two phases, the Jacobian of the reduced equilibrium map satisfies 
$\partial_\theta g_\theta(\theta_{k_\star}^b)
=
-\frac{A_{k_\star}^b}{\sqrt{2}}+\mathcal O(r^2)<0$,
and
$\partial_\theta g_\theta(\theta_{k_\star}^b+\pi)
=
\frac{A_{k_\star}^b}{\sqrt{2}}+\mathcal O(r^2)>0,
$
for sufficiently small $r$. Thus the aligned phase is locally stable, the anti-aligned phase is unstable, and forcing pins the selected phase to the phase of the projected evidence $\Psi_A(\vb)$ as shown in Figure~\ref{fig:bd}.

\subsection{Subcritical Bifurcation and Hysteretic Best Response} \label{subsec:subcritical} When $\Gamma>0$, the cubic term destabilizes the bifurcating branch. To obtain a bounded subcritical bifurcation, we introduce $\alpha=\alpha_0+\kappa\|\vz\|^2$, giving $\mu=\mu_0+\kappa\lambda_{k_\star}r^2$, where $\mu_0=-d_z+\alpha_0\lambda_{k_\star}$ and $\|\vz\|^2=r^2+\mathcal O(r^6)$. The radial reduced equation becomes $ g_r=\mu_0r+\tilde\Gamma r^3+\Delta r^5 +[\Psi_A(\vb)]^\top\ve_\theta=0, $ where $\tilde\Gamma=\Gamma+\kappa\lambda_{k_\star}$, $\Delta<0$, and $[\Psi_A(\vb)]^\top\ve_\theta =\frac{A^b_{k_\star}}{\sqrt{2}}\cos(\theta-\theta^b_{k_\star})$ is the projected evidence along the current phase. Subcriticality requires $\tilde\Gamma>0$, equivalently $\kappa>-\Gamma/\lambda_{k_\star}$. As in the supercritical case, the radial normal form may be read in signed amplitude along a fixed direction $\ve_\theta$. Let $\mu_{\mathrm{SN}}<0$ denote the saddle-node point of the unforced radial equation, defined by $g_r=0$ and $\partial_r g_r=0$. For $\mu_0\in(\mu_{\mathrm{SN}},0)$, the stabilized subcritical pitchfork has coexisting neutral and finite-amplitude committed attractors, separated by unstable branches. The signed committed states $r=\pm r^\star$ correspond in polar coordinates to phases separated by $\pi$, since $-r^\star\ve_\theta=r^\star\ve_{\theta+\pi}$. Thus the unstable branches define thresholds for the onset or loss of finite-amplitude commitment. For decision making, $\mu_0$ is fixed in this coexistence regime, while projected evidence unfolds the radial equation relative to the current phase. A weak destabilizing radial component $[\Psi_A(\vb)]^\top\ve_\theta$ perturbs the committed amplitude but does not erase commitment. A sufficiently strong component removes the finite-amplitude attractor through a saddle-node and induces a jump. The transverse component enters $g_\theta$ and can reorient the committed phase. Hence the subcritical radial structure gives hysteresis in commitment amplitude, while $g_\theta$ determines whether the selected action persists or reallocates. The selected action is unchanged when the finite-amplitude state survives and the phase remains in the same action-selection sector; otherwise, the phase realigns with the dominant projected input as in Section~\ref{subsec:supercritical}.

\subsection{Realization of Best Response}
\label{subsec:proj_BR}
The bifurcation analysis reveals that only the projection of $\vb$ onto $E_c$ enters the reduced equations at leading order, $\Psi_A(\vb) = A_{k_\star}^b \big(\cos\theta_{k_\star}^b\,\vphi_{k_\star} + \sin\theta_{k_\star}^b\,\vpsi_{k_\star}\big)$,
which is the utility $U_i = \Psi_A(\vb_i)$ of Section~\ref{sec:formulation}: a weighted sum of evidence along the directions $\{\vphi_{k_\star}, \vpsi_{k_\star}\}$, 
%with amplitude $A_{k_\star}^b$ and phase $\theta_{k_\star}^b$ 
fully characterizing how evidence biases the emergent decision. Since $\theta^\star = \theta_{k_\star}^b$, the selected action $a_i^\star = \arg\max_k U_{ik}$ is $\mathrm{BR}_i^A$ of~\eqref{eq:projected_br}. 
{Hence, under Assumption~\ref{ass:local_regime}, and provided
\(3A^b_{k^\star}>A^b_{3k^\star}\) and \eqref{eq:suppression} holds, the
action selected by the locally stable equilibrium of \eqref{eq:polar_theta}
coincides, near onset, with the projected best response \(\mathrm{BR}_i^A\)
\eqref{eq:projected_br}.}
% \begin{equation}
%     \Psi_A(\vb) = A_{k_\star}^b \big(\cos\theta_{k_\star}^b\,\phi_{k_\star}
%     + \sin\theta_{k_\star}^b\,\psi_{k_\star}\big),
% \end{equation}
% By the reduced phase equation~\eqref{eq:polar_theta}, and under the conditions
% $
% 3A_{k_\star}^b>A_{3k_\star}^b
% $
% and \eqref{eq:suppression}, the locally stable equilibrium satisfies
% $
% \theta^\star=\theta_{k_\star}^b
% $
% near onset. Hence the selected action
% $
% a^\star=\arg\max_k U_{ik}
% $
% coincides locally with the projected best response $\mathrm{BR}_i^A$ of \eqref{eq:projected_br}.
For $k_\star=1$, $\Psi_A(\vb_i)$ is unimodal, so the induced choice is winner-take-all. 
{This perspective also suggests a design interpretation of the evidence projection $\Psi_A$. The raw evidence $\vb_i \in \mathbb{R}^K$ may contain noise or sensing features that should not directly determine the action. The projection $\Psi_A$ specifies which components of the input are decision-relevant and which are attenuated by stable modes. Thus, designing $\Psi_A$ amounts to choosing what information the best response should depend on. 
% For example, in coverage problems, raw evidence may encode sector-wise measurements or event density, while the utility used for action selection is a projected evidence field that emphasizes task-relevant spatial structure.
% For example, in symmetric settings (ring), the admissible eigenvectors are fixed by symmetry, and design acts through eigenvalue placement among Fourier modes.
This leads to an inverse design problem: given a desired decisive subspace $E_d$ of dimension $m < K$, synthesize an action-coupling operator $A$ whose dominant eigenspace is $E_d$. With graph or locality constraints, this becomes a structured eigenstructure assignment problem. The circulant Mexican-hat case studied here is one tractable example, where symmetry fixes the Fourier eigenspaces and design reduces to eigenvalue placement. More generally, $A$ need not be circulant; different task geometries or utility definitions can be encoded through the dominant eigenspace, spectral gaps, and sparsity pattern of $A$.}
% {This perspective also suggests a interaction-matrix design problem: given a desired projection of evidence onto a low-dimensional subspace, synthesize an action-coupling operator $A$ whose dominant eigenstructure realizes that projection. With graph constraints, this becomes a structured eigenstructure assignment problem.
% % ; with additional Mexican-hat coupling restriction, it reduces to selecting coupling parameters so that the desired structured mode is dominant. 
% In symmetric settings (ring), the admissible eigenvectors are fixed by symmetry, and design acts through eigenvalue placement among Fourier modes.
% % Finally,
% While the evidence lies in $\mathbb{R}^K$, the effective feature dimension is determined by the dominant eigenspace of $A$. }
In this sense, the proposed dynamics combine linear function approximation with a mechanistic dynamical-systems realization of best response.

\section{A Coverage Game Analysis}
\label{sec:game}
% \todo{include discussion suggested by Vaibhav here.}
{In the proposed framework, the projected utility $U_{ik}=[\Psi_A(\vb_i(t))]_k$ is a linear projection of the evidence field, with the projection determined by the dominant eigenspace of the action-coupling operator $A$. Thus, $A$ specifies which evidence components become decision-relevant in realizing the best response. The coverage game is a natural example because the action utility is a linear functional of the underlying evidence. The same structure also arises in other finite-action problems with relational action structure, e.g., in correlated task allocation, discretized heading selection, and structured hypothesis selection.}

Consider the coverage game with a discrete ring of $K$ sectors $\mathcal{A}=\{1,\dots,K\}$ and event density $V:\mathcal{A} \times \mathbb{R}_{\ge0}\to\mathbb{R}_{\geq 0}$, $\sum_k V(k,t )=1$, which varies on a slow time scale $\tau_b\gg\tau_z$ relative to~\eqref{eq:nod}. Each agent observes evidence $b_{ik}(t) = V(k,t) - \rho\sum_{j\neq i}\mathbf{1}[a_j(t)=k]$, $\rho>0$, balancing event density against congestion~\cite{marden2009cooperative}. For any symmetric circulant $A$ satisfying Assumption~\ref{ass:dominant_mode}, 
the local bifurcation analysis of Section~\ref{sec:bifurcation} 
implies, under Assumption~\ref{ass:local_regime}, that the
NOD~\eqref{eq:nod} implicitly computes the effective utility
$U_{ik}(\va_{-i}, t)= [\Psi_A(\vb_i(t))]_k
      = \sum_{s\in\mathcal A} C_{ks}\, b_{is}(t)$,
where $C_{ks} := \cos\!\big(k^\star(\theta_s-\theta_k)\big)$ 
are the entries of the symmetric interaction kernel $C$ induced by the dominant mode, and the locally stable equilibrium of~\eqref{eq:nod}
%of NDD 
selects the corresponding projected best response near bifurcation onset. 
%  $ U_{ik}(t) = [\Psi_A(b_i(t))]_k
%   = \sum_{s\in\mathcal{A}} b_{is}(t)\cos\!\big(k_\star(\theta_s-\theta_k)\big)$,
% the {first moment of the evidence with respect to the cosine distance from sector $k$.} 
These utilities are the marginal increments of the welfare function~\eqref{eq:potential}, so the game is an exact potential game (Lemma~\ref{lem:potential}) {whose maximizers are Nash equilibria reached exponentially fast at rate $\text{min}(1,2|\mu|)$}, independent of $K$ (Proposition~\ref{prop:NE}).
% , and globally optimal in the $k_\star$-dominant regime (Corollary~\ref{cor:optimal}). 
Under subcritical bifurcation, amplitude and phase hysteresis ensures 
% that the committed decision state is 
robustness to perturbations below $\delta^\star$ (Proposition~\ref{prop:hysteresis}).

\begin{lemma}[Exact potential for the projected game]
\label{lem:potential}
Fix \(t\). Define $\bar V_k(t) := \sum_{s\in\mathcal A} C_{ks} V(s,t)$. 
Then, the game with the utilities $U_{ik}$ is an exact potential game with potential
\begin{align}
\label{eq:potential}
    W(\va,t)
:= \sum_{i=1}^N \bar V_{a_i}(t)
   - \frac{\rho}{2}\sum_{i\neq j} C_{a_i,a_j}.
\end{align}
Consequently, $\va^\star$ is a projected Nash equilibrium if and only if
it is a local maximizer of  $W(\cdot,t)$.
\end{lemma}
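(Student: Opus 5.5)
The plan is to verify the exact-potential identity directly: for every agent $i$, every $\va_{-i}$, and every pair of actions $k,k'\in\mathcal A$,
\[
W(k,\va_{-i},t)-W(k',\va_{-i},t)=U_{ik}(\va_{-i},t)-U_{ik'}(\va_{-i},t).
\]
The equilibrium characterization then follows from the standard potential-game argument~\cite{monderer1996potential}.

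\textbf{Step 1 (rewrite the utility).} First I will compute $U_{ik}$ explicitly from the coverage evidence. Substituting $b_{is}(t)=V(s,t)-\rho\sum_{j\neq i}\mathbf 1[a_j=s]$ into $U_{ik}=\sum_s C_{ks}b_{is}$ and exchanging the finite sums gives
\[
U_{ik}(\va_{-i},t)=\bar V_k(t)-\rho\sum_{j\neq i}C_{k,a_j}.
\]
The indicator collapses the inner sum over $s$ to $s=a_j$. Note that $b_i$ depends only on $\va_{-i}$, not on $a_i$, so $U_{ik}$ is well defined as a function of $(k,\va_{-i})$.

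\textbf{Step 2 (isolate agent $i$ in $W$).} Next I will split $W$ into the terms that involve $a_i$ and those that do not. The first sum contributes $\bar V_{a_i}$ plus terms independent of $a_i$. In the pair sum over ordered pairs $i\neq j$, the terms involving agent $i$ are $\sum_{j\neq i}(C_{a_i,a_j}+C_{a_j,a_i})$. Since $C_{ks}=\cos(k^\star(\theta_s-\theta_k))$ is symmetric, this equals $2\sum_{j\neq i}C_{a_i,a_j}$, and the factor $\rho/2$ turns it into $\rho\sum_{j\neq i}C_{a_i,a_j}$. The remaining pairs do not involve $a_i$. Hence
\[
W(a_i,\va_{-i},t)=U_{i a_i}(\va_{-i},t)+Q_i(\va_{-i},t),
\]
where $Q_i$ does not depend on $a_i$, and the identity above follows immediately. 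The only point needing care is the symmetry of $C$, which is exactly what produces the factor $\tfrac12$ in~\eqref{eq:potential}. I expect this to be the main (though mild) obstacle, so I will state it explicitly. Diagonal terms $C_{kk}$ never appear, because the sum excludes $i=j$.

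\textbf{Step 3 (equilibria).} By definition, $\va^\star$ is a projected Nash equilibrium iff $U_{i a_i^\star}\ge U_{ik}$ for all $i$ and all $k$. By Step 2 this holds iff $W(\va^\star,t)\ge W(k,\va^\star_{-i},t)$ for all $i$ and $k$. That is exactly the statement that $\va^\star$ is a local maximizer of $W(\cdot,t)$, where ``local'' is understood with respect to the unilateral-deviation neighborhood of $\va^\star$ in $\mathcal A^N$. I will make this interpretation of ``local maximizer'' explicit in the proof. As a remark, since $\mathcal A^N$ is finite, a global maximizer of $W$ exists, so projected Nash equilibria exist.
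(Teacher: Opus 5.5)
Your proposal is correct and follows essentially the same route as the paper: rewrite $U_{ik}=\bar V_k(t)-\rho\sum_{j\neq i}C_{k,a_j}$, use the symmetry of $C$ to show that a unilateral deviation of agent $i$ changes $W$ by exactly the change in $U_i$, and then apply the standard potential-game characterization of equilibria. Your explicit bookkeeping of the factor $\tfrac12$ over ordered pairs, and your clarification that ``local maximizer'' means with respect to unilateral deviations to any action, fill in details the paper leaves implicit; the paper simply cites Monderer--Shapley for the equivalence.
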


\begin{proof}
By definition,
$U_{ik}(\va_{-i},t)
= \sum_{s\in\mathcal A} C_{ks}V(s,t)
   - \rho\sum_{j\neq i}\sum_{s\in\mathcal A} C_{ks}\mathbf 1[a_j=s]
   = \bar V_k(t)-\rho\sum_{j\neq i} C_{k,a_j}$.
Consider a unilateral deviation of agent $i$ from $a_i$ to $a_i'$.
Since $C$ is symmetric,
$\Delta  W
:=  W(a_i',a_{-i},t)- W(a_i,a_{-i},t) 
= \bar V_{a_i'}(t)-\bar V_{a_i}(t)
   - \rho\sum_{j\neq i}\bigl(C_{a_i',a_j}-C_{a_i,a_j}\bigr)
   =U_i(a_i',\va_{-i},t)-U_i(a_i,\va_{-i},t)$.
Therefore $W$ is an exact potential for the projected game.
The equivalence between projected Nash equilibria and local maximizers of $W$ is the standard characterization for finite exact
potential games~\cite{monderer1996potential}.
\end{proof}

\begin{proposition}[Local convergence to projected BR]
\label{prop:NE}
Let $\Gamma<0$, $A^b_{k_\star}\neq 0$, and suppose \eqref{eq:suppression} together with 
Assumptions~\ref{ass:dominant_mode} and~\ref{ass:local_regime} hold. Then, for almost any initial condition, the dynamics~\eqref{eq:nod} converge locally exponentially at rate $\min(1,2|\mu|)$ to a geometry-aware best response $\mathrm{BR}^A(t)$ of the coverage game at each $t$.
\end{proposition}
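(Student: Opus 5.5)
The plan is to work entirely with the reduced polar dynamics of Proposition~\ref{prop:polar} on $E_c$, slaved to the stable directions. Fix $t$, so that $\vb_i(t)$ is frozen; this is justified by the time-scale separation $\tau_b\gg\tau_z$. First I would argue that the reduction map $W$ constructed by the implicit function theorem is the graph of a locally attracting invariant (center-type) manifold of \eqref{eq:nod}. The noncritical eigenvalues $\sigma_m=-d_z+\alpha_c\lambda_m$ are bounded away from zero by Assumption~\ref{ass:dominant_mode}, and after rescaling time by $\tau_z$ and normalizing $d_z$ the transverse contraction rate is at least of order one. This is the source of the ``$1$'' in $\min(1,2|\mu|)$, interpreting the stable spectral gap in normalized units. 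Lemma~\ref{lem:suppression} then says that on this manifold the noncritical evidence only perturbs the reduced field at higher order, under \eqref{eq:suppression}.

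Next I would analyze the planar reduced flow $\dot r=g_r$, $r\dot\theta=g_\theta$ with $\Gamma<0$, $\mu>0$, and $A^b_{k_\star}\neq0$. Using the supercritical analysis of Section~\ref{subsec:supercritical}, the equilibria are perturbations of $(r^\star,\theta^b_{k_\star})$ and $(r^\star,\theta^b_{k_\star}+\pi)$, plus the perturbed origin. The condition $3A^b_{k_\star}>A^b_{3k_\star}$ is implied near onset by the small-$r$ dominance of the first term. The Jacobian at the aligned point is approximately block diagonal. Its radial entry is $\partial_rg_r=-2\mu+\mathcal O(\|\vb\|)$, and its phase entry is $\partial_\theta g_\theta/r^\star=-A^b_{k_\star}/(\sqrt2 r^\star)+\mathcal O(r)<0$. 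So the aligned point is a hyperbolic sink. The anti-aligned point is a saddle, and the origin is a source within $E_c$.

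For the global-in-the-neighborhood statement (``almost any initial condition''), I would build a Lyapunov-like argument on the annulus. On the annulus, $\dot r$ points inward/outward toward $r^\star$, so every trajectory enters a thin band around $r=r^\star$. Inside the band, the phase equation is a gradient flow on the circle, $\dot\theta\approx-\partial_\theta V(\theta)$ with $V=-\tfrac{A^b_{k_\star}}{\sqrt2 r}\cos(\theta-\theta^b_{k_\star})$. All trajectories therefore converge to the sink except those on the measure-zero stable manifolds of the saddle and the origin. Combining with the attracting slow manifold via a standard shadowing/asymptotic-phase argument gives exponential convergence of the full state at rate equal to the slowest of the transverse rate and the radial rate $2|\mu|$. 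The phase rate is not the bottleneck because $A^b_{k_\star}/r^\star$ dominates near onset.

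Finally, I would identify the limit with the best response. Since $\theta^\star=\theta^b_{k_\star}$, Section~\ref{subsec:proj_BR} shows that $\arg\max_k z_k$ equals $\arg\max_k U_{ik}$ with $U_{ik}=\sum_sC_{ks}b_{is}$, i.e.\ $\mathrm{BR}^A_i(\va_{-i})$. By Lemma~\ref{lem:potential}, this is a best response in the exact potential game. The main obstacle I expect is the rate claim. The phase eigenvalue scales like $A^b_{k_\star}/r^\star$ and the transverse rate like $|\sigma_m|$, so asserting exactly $\min(1,2|\mu|)$ requires an ordering assumption near onset: $\|\vb\|$ small but $A^b_{k_\star}/r^\star\gtrsim 2\mu$. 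I would state this explicitly as part of the local regime in Assumption~\ref{ass:local_regime} rather than derive it.
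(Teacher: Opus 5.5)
Your route differs from the paper's mainly in where the ``$1$'' in $\min(1,2|\mu|)$ comes from. The paper never discusses the stable modes. It treats amplitude and phase as two decoupled scalar relaxations: rate $2|\mu|$ from $\partial_r g_r(r^\star)=-2|\mu|$, and rate $1$ from the assertion $\partial_\theta g_\theta(\theta^b_{k_\star})=-1$ ``globally at leading order''. It then identifies the attractor with $\mathrm{BR}^A_i$ via Section~\ref{subsec:proj_BR} and Lemma~\ref{lem:suppression}, as in your last step. Your additions (attraction to the graph of $W$, a planar phase portrait, measure-zero exceptional sets) are real improvements. Your doubt about the phase rate is also justified. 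Since $\dot\theta=g_\theta/r$ and the forced-case formula gives $\partial_\theta g_\theta(\theta^b_{k_\star})=-A^b_{k_\star}/\sqrt2+\mathcal O(r^2)$, the phase rate is $A^b_{k_\star}/(\sqrt2\,r)$. This tends to $0$ as $A^b_{k_\star}\to0^+$, so the paper's $-1$ is a normalization and some extra hypothesis is unavoidable. (Your transverse rate is $d_z(1-\lambda_m/\lambda_{k_\star})$ in units of $1/\tau_z$, a spectral gap rather than $1$; near onset this is harmless, since $2|\mu|$ is smaller.)

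The gap is in your rate bookkeeping, and it reverses your conclusion about the bottleneck. Put $c:=A^b_{k_\star}/\sqrt2$. At the aligned equilibrium $\mu r+\Gamma r^3+c=0$, and the leading-order Jacobian (dropping the $\eta r^2$ terms) is diagonal with
\[
\lambda_\theta=-\frac{c}{r}=\mu+\Gamma r^2,\qquad \lambda_r=\mu+3\Gamma r^2=-2\mu-\frac{3c}{r}.
\]
Several consequences follow:
\begin{itemize}
\item $\lambda_r<\lambda_\theta<0$ always, so the phase mode is \emph{always} the slower reduced mode.
\item The radial correction is $\mathcal O(\|\vb\|/r)$, not $\mathcal O(\|\vb\|)$.
\item $\lambda_r=-2\mu$ only when $c=0$, which is exactly when the phase rate vanishes.
\end{itemize}
The reduced rate is therefore $c/r$. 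Near onset, $\min(1,2|\mu|)$ can hold only as a lower bound, and precisely when $c/r\ge2\mu$, i.e.\ $r\ge\sqrt3\,r^\star$, i.e.\ $c\ge2\sqrt3\,\mu r^\star$. This is your ordering condition, but with the true amplitude $r>r^\star$. Under it $\lambda_r\le-8\mu$, so ``equal to the slowest of the transverse and radial rates'' must become ``at least''.

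There is a further inconsistency. The anti-aligned branch $\mu r-|\Gamma|r^3=c$ has positive roots (your saddle and perturbed-origin source) only if $c\le\frac{2}{3\sqrt3}\mu r^\star$, which is incompatible with $c\ge2\sqrt3\,\mu r^\star$. So your sink--saddle--source portrait belongs to the weak-forcing regime, where the rate is below $0.39\,\mu$ and the claimed rate fails. In the regime where the rate bound holds, the aligned sink is the unique nearby equilibrium in $E_c$ and no exceptional set is needed. You should either split the argument by regime, or prove the rate $\min\bigl(\min_{m\neq k_\star}|\sigma_m|,\,A^b_{k_\star}/(\sqrt2\,r)\bigr)$ in general and impose $r\ge\sqrt3\,r^\star$ as the hypothesis that recovers $\min(1,2|\mu|)$.
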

\begin{proof}
Since $\tau_b\gg\tau_z$, $\vb_i(t)$ is quasi-static and~\eqref{eq:nod} tracks the moving attractor adiabatically. At each $t$, we establish convergence to a best response in amplitude and angle separately, with overall rate $\min(1,2|\mu|)$: $\alpha>\alpha_c$ and $\Gamma<0$ give stable branch $r^\star=\sqrt{-\mu/\Gamma}$ with $\partial_r g_r(r^\star)=-2|\mu|<0$ (Section~\ref{subsec:supercritical}), so the radial component converges exponentially at rate $2|\mu|$. $A^b_{k_\star}\neq 0$ gives a unique stable angular solution $\theta^\star=\theta^b_{k_\star}(t)$ for any $\theta(0)$, since $\partial_\theta g_\theta(\theta^b_{k_\star}) = -1$ globally at leading order, yielding exponential convergence at rate $1$. By Section~\ref{subsec:proj_BR} and Lemma~\ref{lem:suppression}, the attractor $(r^\star,\theta^\star)$ selects $a_i^\star = \mathrm{BR}_i^A(t)$, the geometry-aware best response of agent $i$ at each $t$. The overall convergence rate is $\min(1,2|\mu|)$, determined by the slower of the two modes.
\end{proof}

\begin{proposition}[Hysteresis in commitment and phase]
\label{prop:hysteresis}
Assume the state-dependent-gain extension of Section~\ref{subsec:subcritical} with $\tilde\Gamma>0$, $\Delta<0$, and $\mu_0\in(\mu_{\mathrm{SN}},0)$. Let $(r^\star,\theta_0)$ be a locally stable committed equilibrium. Then there exists $\delta^\star>0$ such that, if
$
|[\Psi_A(\delta\mathbf{b}_i)]^\top \mathbf{e}_{\theta_0}|<\delta^\star
$
in the destabilizing direction, transient fluctuations $\delta\mathbf{b}_i(t)$ do not destroy finite-amplitude commitment. If this projected input exceeds the saddle-node threshold, the committed branch is lost and the state undergoes a finite-amplitude jump. The selected action persists provided the resulting phase remains in the same action-selection sector; otherwise, the angular equation determines the reallocated action.
\end{proposition}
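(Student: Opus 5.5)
The plan is to work entirely with the reduced radial equation of Section~\ref{subsec:subcritical},
\[
g_r(r;\beta)=\mu_0 r+\tilde\Gamma r^3+\Delta r^5+\beta,\qquad \beta:=[\Psi_A(\vb_i)]^\top\ve_{\theta},
\]
read in signed amplitude along the fixed direction $\ve_{\theta_0}$, together with the angular equation \eqref{eq:polar_theta}. Lemma~\ref{lem:suppression} lets us neglect noncritical harmonics of $\delta\vb_i$ at leading order, so a perturbation acts only through its projection $\Psi_A(\delta\vb_i)$. I then split this projection into a radial part $\delta\beta=[\Psi_A(\delta\vb_i)]^\top\ve_{\theta_0}$ and a transverse part along $\ve_{\theta_0+\pi/2}$.

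For the radial part, I first study the unforced polynomial $p(r)=\mu_0 r+\tilde\Gamma r^3+\Delta r^5$ with $\tilde\Gamma>0$, $\Delta<0$ and $\mu_0\in(\mu_{\mathrm{SN}},0)$. In this regime $p$ has five simple zeros $0,\pm r_u,\pm r^\star$ with $0<r_u<r^\star$. The outer zeros satisfy $p'(\pm r^\star)<0$ and are stable; the zeros $\pm r_u$ have $p'>0$ and are unstable. Next I locate the local extrema of $p$ between the zeros. The local minimum $r_m\in(r_u,r^\star)$ has $p(r_m)<0$, and I set
\[
\delta^\star:=|p(r_m)|>0 .
\]
This is exactly the saddle-node value: for $\beta$ in the destabilizing direction with $|\beta|<\delta^\star$, the equation $p(r)+\beta=0$ keeps a zero $r^\star(\beta)$ near $r^\star$. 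By the implicit function theorem this zero depends smoothly on $\beta$ with $p'(r^\star(\beta))<0$, so finite-amplitude commitment persists. At $|\beta|=\delta^\star$ this root collides with the unstable one ($g_r=\partial_r g_r=0$) and disappears. For $|\beta|>\delta^\star$ the only remaining attractors are the small-amplitude branch and the opposite-sign committed branch, so the state must jump a finite distance.

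For transient rather than constant $\delta\vb_i(t)$, I would add a comparison argument. If $|\delta\beta(t)|<\delta^\star$ for all $t$, the interval between the moving unstable root and the region $r>r_u$ remains forward invariant, because the sign of $g_r$ on its boundary is fixed. The trajectory therefore stays in the basin of the committed branch and relaxes back to $r^\star$ once the fluctuation ends. This is the hysteresis statement.

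For the phase, I use $g_\theta$ from Proposition~\ref{prop:polar}. A transverse component shifts the stable zero of $g_\theta$ continuously to a new $\theta^\star(t)$, since $\partial_\theta g_\theta\neq0$ at the stable phase by the supercritical analysis carried over. The selected action is $\arg\max_k [r\ve_{\theta}]_k$, which is constant as long as $\theta$ stays inside the sector of phases mapping to the same maximizing index. Hence the action persists whenever $\theta^\star(t)$ remains in the sector of $\theta_0$. Otherwise the stable solution $\theta^\star=\theta^b_{k_\star}$ of the perturbed evidence determines the reallocated action, exactly as in Section~\ref{subsec:supercritical}.

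I expect the main obstacle to be the radial and angular coupling. The $\eta r^2$ terms and the $3k_\star$ harmonic mean the radial forcing is evaluated at the moving phase $\theta(t)$, not at $\theta_0$. To handle this, I would try a uniform bound: take $\delta^\star$ slightly smaller than $|p(r_m)|$ to absorb an $\mathcal O(\eta r^2)$ margin, and use Assumption~\ref{ass:local_regime} to keep all cubic remainder terms below that margin.
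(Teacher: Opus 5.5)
You follow the same route as the paper. You unfold the quintic radial normal form by the projected input along $\ve_{\theta_0}$, identify $\delta^\star$ with the saddle-node value, and use $g_\theta$ with the action-selection sectors to decide persistence versus reallocation. Your version is more explicit than the paper's short argument: you give the root structure of $p$, apply the implicit function theorem along the committed branch, add an invariance argument for time-varying $\delta\vb_i$, and flag the radial--angular coupling, which the paper does not address.

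One step is wrong as written. With $\mu_0<0$, $\tilde\Gamma>0$, $\Delta<0$ you have $p<0$ on $(0,r_u)$ and $p>0$ on $(r_u,r^\star)$; your own signs $p'(r_u)>0$ and $p'(r^\star)<0$ force this. So the critical point in $(r_u,r^\star)$ is a local \emph{maximum} $r_M$ with $p(r_M)>0$. The negative local minimum lies in $(0,r_u)$, and its depth marks the other saddle-node, where the neutral state is lost under aligned forcing (onset of commitment). That is generally a different number, so read literally your definition gives the wrong threshold.

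The correct choice is $\delta^\star=p(r_M)=\max_{[r_u,r^\star]}p$. Since $p$ is monotone on $(r_u,r_M)$ and on $(r_M,r^\star)$, for $\delta\beta\in(-\delta^\star,0]$ the unstable and committed roots lie on opposite sides of $r_M$. They collide at $r_M$ exactly when $\delta\beta=-\delta^\star$. This also gives a cleaner transient argument than a moving boundary: $g_r(r_M,t)=p(r_M)+\delta\beta(t)>0$ pointwise, so $\{r\ge r_M\}\subset(r_u,\infty)$ is forward invariant.

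Finally, calling $p(r_M)$ exactly the saddle-node value tacitly assumes zero baseline radial forcing at $\theta_0$. With a baseline projection $\beta_0$ along $\ve_{\theta_0}$ (e.g.\ $A^b_{k_\star}/\sqrt{2}$ at the aligned phase), the collision occurs at $\delta\beta=-(p(r_M)+\beta_0)$. In that case $p(r_M)$ is only a conservative threshold: it suffices for the existence of $\delta^\star$ but is not sharp for the jump claim.
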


\begin{proof}
By Section~\ref{subsec:subcritical}, for $\mu_0\in(\mu_{\mathrm{SN}},0)$ the committed branch is separated from the neutral state by an unstable branch, and projected input along $\mathbf{e}_{\theta_0}$ unfolds the radial equation. Hence a nonzero saddle-node threshold $\delta^\star$ exists: below it the finite-amplitude committed branch persists, while above it the branch is lost and the state jumps. The selected action persists if this radial commitment is maintained and the phase displacement induced by $g_\theta$ stays within the same action-selection sector; otherwise, the angular dynamics determine the reallocated action from the projected input.
\end{proof}

\subsection{Numerical Experiment}
\label{subsec:numerical}

We simulate a repeated coverage game over 700 steps: $N\!=\!10$ agents repeatedly choose actions from an action set of $K\!=\!18$ sectors on a ring. %We simulate a repeated coverage game with $N=10$ agents on a ring of $K=18$ sectors over $T=700$ steps. 
The coupling matrix $A$ is sampled from a symmetric circulant Mexican-hat kernel (Figure~\ref{fig:setup}(B)), with dominant eigenmode $k_\star=1$ (Assumption~\ref{ass:dominant_mode}). This encodes the goal of positioning each agent at the event density center of mass of its neighborhood. 
%so it can respond bidirectionally around each peak. 
The event density $V(k,t)$ has three peaks of unequal amplitude concentrated near sectors $\{3,\,8\text{--}10,\,14\text{--}16\}$ (Figure~\ref{fig:setup}(A)), with transient ambiguity at $t\approx 280$ and a directional shift at $t\approx 430$, on timescale $\tau_b\gg\tau_z$. The global objective is to maximize weighted coverage $W(a,t)$ (Lemma~\ref{lem:potential}) whose local maximizers are under Nash equilibria. 
Figure~\ref{fig:comparison} plots the agent trajectories (white circles), the cumulative switches, and the BR fraction (fraction of agents simultaneously playing $\mathrm{BR}^A_i(t)$ at a given step) for both the neuromorphic decision dynamics (NOD) and logit dynamics. Subcritical NOD ($\alpha=0.96 < \alpha_c=1$) is compared against logit dynamics which is supplied $\Psi_A(\vb_i)$ directly. NOD recovers $\Psi_A(\vb_i)$ from raw evidence $\vb_i$ alone.
Figure~\ref{fig:comparison} shows that NOD outperforms logit across all four metrics. NOD agents spontaneously spread into three stable clusters around the dominant peaks of $V(\cdot,t)$, maximizing $W(a,t)$ bidirectionally, while logit agents disperse incoherently across all $18$ sectors. NOD maintains BR fraction $=1.0$ for all $700$ steps (Proposition~\ref{prop:NE}), whereas logit fluctuates between $0$ and $1$ and never stabilizes. Subcritical hysteresis (Proposition~\ref{prop:hysteresis}) renders NOD immune to the ambiguity window near $t\approx 280$ which makes the logit dynamics BR fraction collapse repeatedly. At $t\approx 430$, NOD executes a single deterministic reallocation when the directional shift exceeds $\delta^\star$ and immediately re-commits, accumulating $O(10)$ total switches over $700$ steps versus $O(5000)$ for logit.

\begin{figure}
    \centering
    \includegraphics[width=\linewidth,trim=0 20 0 0]{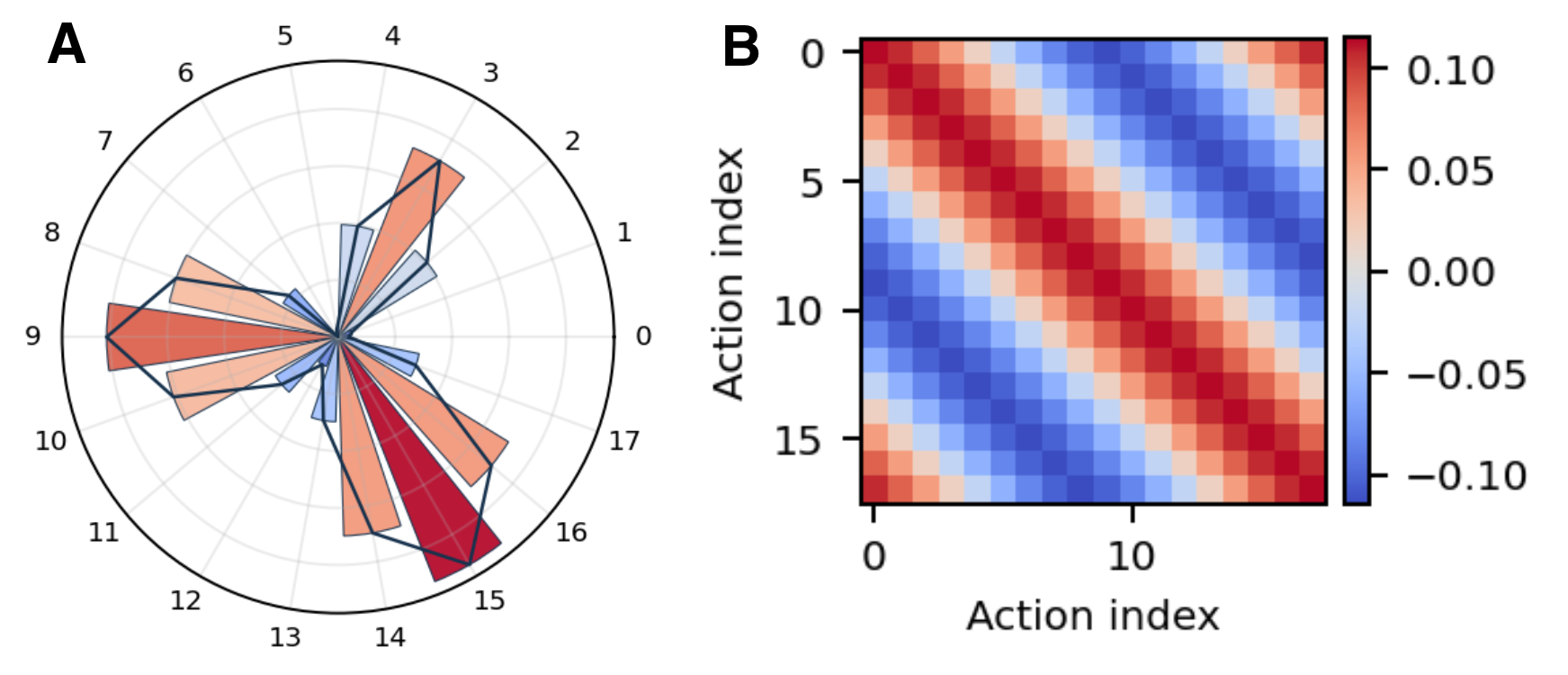}
      \caption{(A) Raw event density $V(k,0)$ on the action ring.
  (B) Mexican-hat action-coupling matrix $A$ with dominant eigenmode $k_\star=1$.}
    \label{fig:setup}
    % \vspace{-1em}
\end{figure}

% \begin{figure}[t]
%   \centering
%   \begin{minipage}[t]{0.3\linewidth}
%     \centering
%     {\small (A)}\\[1pt]
%     \includegraphics[width=\linewidth]{figures/initial_utility_polar_round0.png}
%   \end{minipage}\hfill
%   \begin{minipage}[t]{0.3\linewidth}
%     \centering
%     {\small (B)}\\[1pt]
%     \includegraphics[width=\linewidth]{figures/geometry_aware_coupling_matrix_A.png}
%   \end{minipage}
%   % \hfill
%   % \begin{minipage}[t]{0.35\linewidth}
%   %   \centering
%   %   {\small (C)}\\[1pt]
%   %   \includegraphics[width=\linewidth]{figures/sub_bd.png}
%   % \end{minipage}
  % \caption{(A) Raw event density $V(k)$ on the action ring at 
  % $t=0$, showing three peaks of unequal amplitude. 
  % (B) Mexican-hat coupling matrix $A$; short-range excitation 
  % and long-range inhibition produce dominant eigenmode $k_\star=1$. 
%   % (C) Subcritical bifurcation diagram; for $\mu_0\in(\mu_{SN}, 0)$ 
%   % the system is bistable, sustaining committed decisions below $\mu_c$.
%   }
%   \label{fig:setup}
% \end{figure}

\begin{figure}[t]
  \centering
  \includegraphics[width=\linewidth,trim=0 20 0 10]{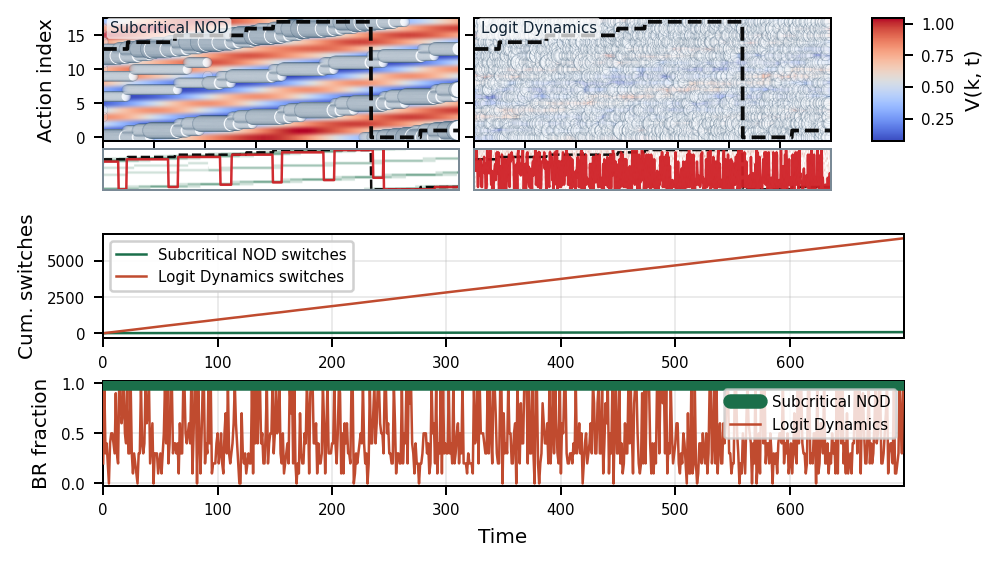}
  %\caption{
  % \todo{rewrite caption.} Subcritical neuromorphic dynamics vs. logit dynamics. \textit{Top:} Agent trajectories on $V(k,t)$ with agents marked by white circles. Neuromorphic agents spread around the dominant peaks of $V(\cdot,t)$ and remain stable, maximizing weighted coverage $W(a,t)$, while logit agents disperse across all sectors without committing. \textit{Middle:} Neuromorphic dynamics accumulates $O(10)$ cumulative switches over $700$ steps than logit which accumulates $O(5000)$ switches. \textit{Bottom:} Neuromorphic dynamics maintains BR fraction $=1.0$ throughout while logit fluctuates between $0$ and $1$ and never stabilizes.
  \caption{Neuromorphic decision dynamics with subcritical NOD vs. logit dynamics on a coverage task. \textit{Top:} Heatmap shows event density $V(k,t)$; white circles are agents scaled by population fraction at each action. The strip below each heatmap shows the modal action (red, largest population fraction) and effective utility (black). Neuromorphic agents commit to dominant peaks and track them stably; logit agents scatter without committing. \textit{Middle:} Cumulative switches over 700 steps: $\mathcal{O}(10)$ for NOD vs.\ $\mathcal{O}$(5000) for logit. \textit{Bottom:} NOD maintains BR fraction = 1 throughout; logit fluctuates erratically.}
  \label{fig:comparison}
    \vspace{-1.5em}
\end{figure}

\section{Conclusion}
\label{sec:conclusion}
We developed a neuromorphic realization of best response for finite-action games with relational action structure. Unlike classical best response rules, which specify only the selected action, or logit models, which specify a response distribution, our approach models decision making as a nonlinear dynamical process whose stable attractors realize committed actions. This mechanistic viewpoint explains how commitment forms, how near-symmetric alternatives are resolved through basins of attraction, and why decisions persist under transient perturbations. For symmetric circulant coupling, the coupling operator selects the evidence components governing decision formation, yielding a geometry-aware best response with exponential convergence, perturbation robustness, and hysteretic commitment in the subcritical regime. The coverage game instantiation showed that logit dynamics is prone to oscillatory switching even with geometry-aware utility, whereas NOD avoids oscillations and recovers Nash equilibria. These properties are especially relevant for resource-constrained robotic systems, where coordination must be achieved with limited computation, noisy observations, and little tolerance for indecisive switching.
 
%%%%%%%%%%%%%%%%%%%%%%%%%%%%%%%%%%%%%%%%%%%%%%%%%%%%%%%%%%%%%%%%%%%%%%%%%%%%%%%%

%%%%%%%%%%%%%%%%%%%%%%%%%%%%%%%%%%%%%%%%%%%%%%%%%%%%%%%%%%%%%%%%%%%%%%%%%%%%%%%%
\section{ACKNOWLEDGMENTS}
We sincerely thank Giovanna Amorim, Ian Xul Belaustegui, and Anastasia Bizyaeva for helpful discussions during the development of this research.

%%%%%%%%%%%%%%%%%%%%%%%%%%%%%%%%%%%%%%%%%%%%%%%%%%%%%%%%%%%%%%%%%%%%%%%%%%%%%%%%

% %%%%%%%%%%%%%%%%%%%%%%%%%%%%%%%%%%%%%%%%%%%%%%%%%%%%%%%%%%%%%%%%%%%%%%%%%%%%%%%%
\bibliographystyle{IEEEtran}
\bibliography{ref}

\end{document}